\documentclass[10pt,a4paper]{article} 

\usepackage[utf8]{inputenc}
\usepackage[T1]{fontenc}
\usepackage[english]{babel} 
\usepackage{mathpazo} 
\usepackage{microtype} 
\usepackage{authblk}
\usepackage{xcolor}

\usepackage[a4paper, left=2.5cm, right=2.5cm, top=2.5cm, bottom=2.5cm]{geometry}
\usepackage{fancyhdr}
\usepackage{titlesec}

\titleformat{\section}{\large\bfseries}{\thesection}{1em}{}
\titleformat{\subsection}{\normalsize\bfseries}{\thesubsection}{1em}{}

\usepackage{amsmath,amssymb,amsthm}
\usepackage{mathtools}
\usepackage{mathrsfs}

\usepackage{graphicx}
\usepackage{float}
\usepackage{booktabs}
\usepackage[font=small,labelfont=bf]{caption}

\usepackage{enumitem}
\setlist[itemize]{noitemsep, topsep=0pt}

\usepackage{csquotes}
\usepackage[numbers,sort&compress]{natbib}
\usepackage{hyperref}
\hypersetup{
    colorlinks=true,
    linkcolor=blue,
    filecolor=magenta,
    urlcolor=cyan,
    citecolor=red,
    pdftitle={Stochastic epidemiological modeling},
    pdfauthor={L.A. Valencia, A. Moran, D. Cataño}
}

\numberwithin{equation}{section}

\theoremstyle{plain}
\newtheorem{theorem}{Theorem}[section]
\newtheorem{lemma}[theorem]{Lemma}
\newtheorem{proposition}[theorem]{Proposition}
\newtheorem{corollary}[theorem]{Corollary}

\theoremstyle{definition}

\theoremstyle{remark}

\begin{document}

\title{\textbf{Modeling Transmission Intensity in SI Epidemics via CIR and Jacobi Processes: Asymptotic Results and Preliminary Intervention Strategies}}

\author[1,]{Leon A. Valencia}
\author[2]{Ra\'ul Alejandro Mor\'an-V\'asquez}
\author[3]{Duvan H. Cata\~no Salazar}
\affil[1,2,3]{Instituto de Matem\'aticas, Universidad de Antioquia, Calle 67 No. 53-108, Medell\'in 050010, Colombia.}


\maketitle

\noindent 
\begin{abstract}
This paper introduces a way of modeling the epidemic transmission rate using a stochastic process of the form $(\beta_t = \varphi(t)P_t : t \ge 0)$, where the positive deterministic function $\varphi(t)$ models the impact of a public health intervention and $P_t$ describes the stochastic evolution of the infection rate in the absence of any control measures. We establish general asymptotic results for an SI model governed by $(\beta_t : t \ge 0)$, showing that the asymptotic behavior is determined by the integrated intensity process $(H_t =\int_0^t \beta_s \, ds : t \ge 0)$. We study the intrinsically bounded Jacobi process and the Cox--Ingersoll--Ross (CIR) process as models for $(P_t : t \ge 0)$; both exhibit almost surely positive sample paths. We highlight that in the case of non-intervention $(\varphi \equiv 1)$, the process $(H_t : t \ge 0)$ is considerably more analytically tractable. Finally, we present numerical simulations for both models in two different scenarios: the case of non-intervention $(\varphi(t)=1)$ and the case of a successful intervention strategy (where $\int_0^\infty \varphi(t) \, dt < \infty$) modeled using exponential decay $\varphi(t) = e^{-\alpha t}$ for both models.
\end{abstract}
\vspace{0.5cm}
\noindent\textbf{Keywords:} Cox--Ingersoll--Ross process, Jacobi process, public health intervention, random transmission rates, SDEs, stochastic epidemiological models.
\vspace{0.5cm}

\section{Introduction}

The quantitative study of infectious diseases fundamentally relies on describing how susceptible and infected individuals meet and interact. The first study of this type of phenomenon is generally attributed to \cite{Hamer1906}. In that work, it was proposed that the rate of new infections is proportional to the product of the densities of both groups. This assumption still remains in many modern epidemiological models. The conceptual approach introduced in \cite{Ross1911} was formalized with greater mathematical rigor in \cite{Kermack1927}. Deterministic modeling continues to serve as a reference to understand this phenomenon \cite{Murray2002, Hethcote2000, Brauer2012}.

Below we provide a brief description of the model. Consider a population of $N$ individuals divided into two groups, susceptible ($S$) and infected ($I$), whose infection dynamics follow the following deterministic system:
\begin{equation*}
\left\{
\begin{aligned}
    \frac{dS(t)}{dt} &= -\beta \frac{S(t)I(t)}{N}, \\
    \frac{dI(t)}{dt} &= \beta \frac{S(t)I(t)}{N},
\end{aligned}
\right.
\end{equation*}
for $t \ge 0$, where $\beta > 0$ represents the transmission rate and the total population $S(t) + I(t) = N$ remains constant. Here, $\beta$ is traditionally defined as the product of the average number of contacts per unit of time and the probability of transmission per contact. 

By assuming a normalized population $N=1$ and noting that $I(t) = 1 - S(t)$, the system reduces to a single ordinary differential equation (ODE) for the density of susceptibles:
\begin{equation}\label{eq1}
\frac{dS_t}{dt} = -\beta S_t(1 - S_t), \quad S_0 = s_0 \in (0,1).
\end{equation}
While Equation \eqref{eq1} describes a purely deterministic evolution, real-world transmission rates are subject to environmental fluctuations and unpredictable human behavior. To capture this uncertainty, we transition to a stochastic framework by replacing the constant parameter $\beta$ with a stochastic process $(\beta_t : t \ge 0)$, leading to the stochastic differential equation (SDE) studied in this work.

The literature offers several methods to incorporate the randomness that naturally occurs in these systems. For instance, \cite{Kegan2005} investigated the effects of treating the initial condition as a random variable rather than a fixed value. While that study analyzed the random time required to reach a specific infection threshold, randomizing the initial state does not alter the system's asymptotic behavior, entailing that the entire population becomes infected. This suggests that to model phenomena where a significant portion of the population remains susceptible, randomness must be incorporated into the evolution of the process, rather than solely in its initialization. 

A common approach involves transforming the deterministic equation \eqref{eq1} into a Stochastic Differential Equation (SDE) of the form:
\begin{equation}\label{eq2}
    dS_t = \mu(S_t)dt + G(S_t)dW_t, \quad S_0 = s_0 \in (0,1),
\end{equation}
where $W_t$ is a standard Wiener process defined on a complete filtered probability space $(\Omega, \mathcal{F}, \{\mathcal{F}_t\}_{t \ge 0}, \mathbb{P})$. The drift coefficient $\mu(\cdot)$ and the diffusion coefficient $G(\cdot)$ characterize the deterministic trend and the intensity of the stochastic fluctuations, respectively. Typically, noise is introduced by perturbing the transmission rate as $\beta_t dt = \beta dt + \sigma dW_t$, where $\sigma > 0$ scales the intensity of the white noise. 

The mathematical regularity of this standard framework is well-established. Notably, by employing Lyapunov functions, it is possible to show that trajectories stay within the interval $(0,1)$ almost surely \cite{Khasminskii2012}. Furthermore, if the noise intensity $\sigma$ is sufficiently small, the stochastic process $(S_t)_{t\ge 0}$ possesses a unique limiting distribution with support on $(0,1)$, preventing the total extinction of the susceptible population.

In this paper, we explore a modeling approach that differs from those traditionally well-studied in the mathematical literature. Instead of perturbing the rate with additive noise, we treat the infection rate as a stochastic process that is inherently positive by construction. Specifically, we utilize the Cox--Ingersoll--Ross (CIR) model and a Jacobi-type process on $[0, a]$ to modulate the transmission dynamics. While these processes are mathematically well-defined and preserve the positive nature of the transmission rate, they share a common asymptotic drawback: without further measures, the population eventually tends toward a fully infected state. 

This motivates the implementation and analysis of an ``intervention strategy'' within this stochastic framework. Formally, we define the transmission rate as a modulated process $\beta_t = \varphi(t) P_t$, where $P_t$ represents the intrinsic stochasticity of the infection (modeled by CIR or Jacobi) and $\varphi(t)$ is a deterministic function representing public health actions. This multiplicative structure allows the intervention to scale the magnitude of the stochastic fluctuations, reflecting that a reduction in the average contact rate also reduces the absolute variance of the infection process. Under certain integrability conditions on $\varphi$, we show that this strategy can shift the asymptotic regime, allowing for the long-term survival of a susceptible fraction of the population.

The rest of this article is organized as follows. Section \ref{sec:general_model} establishes the general stochastic framework for the SI model and provides proofs for the system behavior under different intervention scenarios. Sections \ref{sec:jacobi} and \ref{sec:cir} introduce the two candidates for the stochastic intensity: the bounded Jacobi process and the unbounded CIR process. In Section \ref{sec:random_time}, the focus shifts to risk assessment; here, we analyze the random time needed to reach critical thresholds specifically within the CIR framework under a non-intervention regime, deriving an analytical ``risk monitor'' based on the Chernoff bound. Finally, Section \ref{sec:numerico} provides a numerical comparison to demonstrate how the choice of stochastic driver affects the estimation of extreme risks, followed by concluding remarks in Section \ref{sec:conclusion}.

\section{General Model Formulation and Asymptotic Results}
\label{sec:general_model}

To build our framework, we consider an SI model \textcolor{red}{in} a population of unit size ($N=1$), where $S_t$ represents the proportion of individuals still susceptible at time $t$. Unlike traditional deterministic approaches, we assume the transmission rate is driven by a stochastic process $(\beta_t)_{t\geq 0}$ with continuous and almost surely (a.s.) positive sample paths. Under these assumptions, the epidemic dynamics are described by the following equation:

\begin{equation}\label{eq:si-random}
    \frac{dS_t}{dt} = - \beta_t\, S_t \bigl(1 - S_t \bigr), \qquad S_0 = s_0 \in (0,1).
\end{equation}

A key advantage of this formulation is its analytical tractability, enabling an explicit solution to the system for any given noise realization. The following proposition provides the exact epidemic trajectory.

\begin{proposition}\label{prop:exact-solution}
Let $(\beta_t)_{t \ge 0}$ be a stochastic process with a.s. continuous sample paths. Equation \eqref{eq:si-random} admits a unique global solution given by
\begin{equation}\label{eq:si-solution}
    S_t = \left[ 1 + \left(\frac{1}{s_0} - 1\right)\exp\left(\int_0^t \beta_r\, dr\right) \right]^{-1}, \qquad t \ge 0.
\end{equation}
\end{proposition}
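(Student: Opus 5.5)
The argument is pathwise: fix $\omega$ in the full-measure event on which $t\mapsto\beta_t(\omega)$ is continuous, and treat \eqref{eq:si-random} as a deterministic non-autonomous ODE $\dot S=f(t,S)$ with $f(t,x)=-\beta_t(\omega)x(1-x)$. Since $f$ is continuous in $t$ and locally Lipschitz in $x$, uniformly for $t$ in compacts, the Picard--Lindelöf theorem gives a unique maximal local solution. It therefore suffices to exhibit an explicit solution defined on all of $[0,\infty)$; uniqueness of the maximal solution then identifies it with \eqref{eq:si-solution} and shows it is global.

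To find and verify the formula, I would linearize with the substitution $u_t=1/S_t-1$, valid while $S_t\in(0,1)$. A direct computation gives
\[
\dot u_t=-\frac{\dot S_t}{S_t^2}=\beta_t\frac{1-S_t}{S_t}=\beta_t u_t .
\]
This linear equation has the solution $u_t=(1/s_0-1)\exp(H_t)$, where $H_t=\int_0^t\beta_r\,dr$ is $C^1$ by the fundamental theorem of calculus, using continuity of $\beta$. Inverting, $S_t=(1+u_t)^{-1}$ is exactly \eqref{eq:si-solution}. The function so defined is finite and lies in $(0,1)$ for every $t\ge0$, because $1/s_0-1>0$ and the exponential is positive and finite. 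A direct differentiation confirms that it satisfies the ODE with $S_0=s_0$, so it solves the problem on $[0,\infty)$.

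The only point needing care is making sure that uniqueness covers every solution, not just those staying in $(0,1)$, since the substitution above needs $S\in(0,1)$. Picard--Lindelöf handles this directly: any solution agrees with the explicit one on their common interval of existence. Alternatively, the constant solutions $0$ and $1$ are equilibria, and uniqueness forbids crossing them, so any solution started in $(0,1)$ stays there.

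Finally, measurability of $S_t$ in $\omega$ follows from the formula, since $H_t$ is a pathwise Riemann integral of a continuous adapted process. On the null set where $\beta$ fails to be continuous, $S$ can be defined arbitrarily.
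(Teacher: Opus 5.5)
Your proof is correct and takes the same route the paper indicates. The paper only remarks that \eqref{eq:si-solution} is obtained pathwise by classical methods; your fixed-$\omega$ argument, using the linearizing substitution $u_t = 1/S_t - 1$ together with Picard--Lindel\"of uniqueness for a locally Lipschitz right-hand side, supplies exactly those details (measurability of $H_t$ needs only path continuity via Riemann sums, not adaptedness, which the statement does not assume).
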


\textit{Remark.} The solution \eqref{eq:si-solution} is obtained pathwise using classical methods. This expression indicates that the state of the epidemic is determined by the stochastic process $(H_t = \int_0^t \beta_s \, ds : t \ge 0)$, which we will call the cumulative transmission intensity.

In our approach, we decompose the transmission rate as $\beta_t = \varphi(t) P_t$. Here, $P_t$ models the randomness of the transmission rate in the absence of any interventions, while $\varphi(t)$ is a deterministic function that models the impact of public health interventions. This decomposition allows us to analyze the final state of the epidemic ($S_\infty = \lim_{t \to \infty} S_t$) by studying the asymptotic behavior of $H_t$. Specifically, a portion of the population escapes infection (i.e., $S_\infty > 0$) if and only if $H_t$ remains bounded as $t \to \infty$ almost surely.


The following theorem formalizes the conditions under which an intervention successfully prevents total infection.

\begin{theorem}[Asymptotic State under Successful Intervention]
\label{intervention}
Let $\beta_t = P_t \varphi(t)$, where $\varphi$ is a successful intervention (i.e., $\int_0^\infty \varphi(s) \, ds < \infty$) and $(P_t)_{t\ge 0}$ is a stochastic process with a.s. continuous positive paths and a bounded mean $\sup_{t \ge 0} \mathbb{E}[P_t] \le M < \infty$. Then, $S_\infty \in (0, s_0)$ a.s.
\end{theorem}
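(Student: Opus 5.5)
We work pathwise from the explicit solution \eqref{eq:si-solution} of Proposition~\ref{prop:exact-solution}. Since $\varphi>0$ and $P$ has a.s. positive continuous paths, $\beta_t=\varphi(t)P_t>0$. Hence $H_t=\int_0^t\beta_s\,ds$ is strictly increasing in $t$ and has a limit $H_\infty=\lim_{t\to\infty}H_t\in(0,\infty]$. The map $h\mapsto[1+(1/s_0-1)e^h]^{-1}$ is continuous and strictly decreasing, and it equals $s_0$ at $h=0$. Therefore $S_\infty$ exists a.s. and equals $[1+(1/s_0-1)e^{H_\infty}]^{-1}$, with the convention that this is $0$ when $H_\infty=\infty$. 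The claim $S_\infty\in(0,s_0)$ thus reduces to two facts: $H_\infty>0$ a.s., and $H_\infty<\infty$ a.s.

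\emph{Strict positivity.} Fix $\omega$ in the full-measure event where the path of $P$ is continuous and positive. Then $\beta_s(\omega)>0$ for all $s$, and it is continuous on $[0,1]$ because $\varphi$ is continuous there. So $H_\infty\ge H_1>0$. If $\varphi$ is only assumed measurable and positive, the same conclusion holds, since the integral over $[0,1]$ of a strictly positive measurable function is strictly positive. This gives $S_\infty<s_0$.

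\emph{Finiteness.} This is the main step, and it uses the moment bound. The integrand $\varphi(s)P_s\ge 0$ is jointly measurable, since $P$ has continuous paths. Tonelli's theorem together with monotone convergence then gives
\begin{equation*}
\mathbb{E}[H_\infty]=\int_0^\infty \varphi(s)\,\mathbb{E}[P_s]\,ds\le M\int_0^\infty\varphi(s)\,ds<\infty .
\end{equation*}
A nonnegative random variable with finite expectation is finite a.s., so $H_\infty<\infty$ a.s. This gives $S_\infty>0$ a.s.

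The only delicate points are the measurability needed for Tonelli, which continuity of paths supplies, and stating the conclusion on the intersection of the relevant full-probability events. Combining the two facts through the monotone formula for $S_\infty$ yields $S_\infty\in(0,s_0)$ a.s.
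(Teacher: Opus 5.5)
Your proof is correct and follows essentially the same route as the paper: Tonelli with the uniform mean bound gives $\mathbb{E}[H_\infty]\le M\int_0^\infty\varphi(s)\,ds<\infty$, hence $H_\infty<\infty$ a.s., and the explicit solution \eqref{eq:si-solution} combined with $H_\infty>0$ yields $S_\infty\in(0,s_0)$. Beyond the paper's argument, you also spell out the joint measurability needed for Tonelli and prove the strict positivity of $H_\infty$, which the paper only asserts.
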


\begin{proof}
The non-decreasing nature of $H_t$ ensures that $H_\infty$ is a well-defined random variable (taking values in $[0, \infty]$). To guarantee that $S_\infty$ does not vanish, it is sufficient to show that $H_\infty$ is finite almost surely. By applying Fubini's Theorem to the expected value of $H_\infty$, we obtain:
\begin{align*}
    \mathbb{E}[H_\infty] = \mathbb{E} \left[ \int_0^\infty P_s \varphi(s) \, ds \right] = \int_0^\infty \mathbb{E}[P_s] \varphi(s) \, ds \le M \int_0^\infty \varphi(s) \, ds < \infty.
\end{align*}
Since the expectation is finite, it follows that $H_\infty < \infty$ a.s. Consequently, from the explicit solution given in \eqref{eq:si-solution}, the susceptible fraction $S_\infty$ remains strictly positive, and specifically $S_\infty \in (0, s_0)$ due to the positivity of $H_\infty$ and the initial condition.
\end{proof}

Conversely, without control measures, the system returns to its natural trajectory, even within a stochastic environment.

\begin{theorem}[Asymptotic State without Intervention, $\varphi \equiv 1$]
\label{no-intervention}
Suppose $(P_t)_{t\ge 0}$ is an ergodic Markov process with a unique stationary distribution $\pi$ supported on $(0, \infty)$ and a finite mean $\int_0^\infty x \pi(x) \, dx > 0$. Then, $H_\infty = \infty$ a.s., which implies $S_\infty = 0$ a.s.
\end{theorem}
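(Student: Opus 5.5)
The plan is to use the ergodic theorem for the Markov process $(P_t)_{t\ge 0}$ to pin down the growth rate of $H_t=\int_0^t P_s\,ds$ when $\varphi\equiv 1$. Write $m=\int_0^\infty x\,\pi(x)\,dx\in(0,\infty)$. The pointwise (Birkhoff) ergodic theorem for ergodic Markov processes with stationary law $\pi$, applied to the integrable function $f(x)=x$, gives
\begin{equation*}
\frac{1}{t}H_t=\frac{1}{t}\int_0^t P_s\,ds \longrightarrow m \quad \text{a.s. as } t\to\infty .
\end{equation*}
Since $m>0$, on the almost sure event where this convergence holds there is a random $T(\omega)$ with $H_t\ge \tfrac{m}{2}t$ for all $t\ge T$. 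Hence $H_t\to\infty$, that is, $H_\infty=\infty$ a.s. Note that $H_\infty$ is well defined in $[0,\infty]$ by monotonicity, exactly as in the proof of Theorem \ref{intervention}.

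The conclusion about $S_\infty$ then follows from Proposition \ref{prop:exact-solution}. Because $s_0\in(0,1)$, the constant $\frac{1}{s_0}-1$ is strictly positive. Therefore $\left(\frac{1}{s_0}-1\right)e^{H_t}\to\infty$, and
\begin{equation*}
S_t=\left[1+\left(\tfrac{1}{s_0}-1\right)e^{H_t}\right]^{-1}\to 0 \quad \text{a.s.}
\end{equation*}

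The main obstacle is justifying the ergodic theorem for the actual initial law of $P$. The stationary ergodic theorem yields the a.s. convergence of $\frac1t H_t$ when $P_0\sim\pi$. In our setting the process starts from a fixed $P_0=p_0$, so the result has to be transferred to that starting point. I would first read the hypothesis "ergodic Markov process" as including the standard ratio-limit or law-of-large-numbers statement valid for every starting point. This is the usual meaning for positive recurrent one-dimensional diffusions such as CIR and Jacobi, where the conclusion follows from Maruyama–Tanaka type results for Harris recurrent processes. If a more self-contained route is wanted, I would argue as follows. The event $A=\{\frac1tH_t\to m\}$ is shift invariant, and it has $\mathbb{P}_\pi(A)=1$. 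By Harris recurrence the function $x\mapsto \mathbb{P}_x(A)$ is harmonic, and therefore constant. That constant equals $1$, because it averages to $1$ under $\pi$. This gives $\mathbb{P}_x(A)=1$ for all $x$.

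A weaker fallback also suffices, since we only need divergence of $H_t$ and not the exact rate. Positive recurrence implies that $P$ returns infinitely often to a set $[a,b]\subset(0,\infty)$ with $\pi([a,b])>0$. Path continuity then implies that each visit contributes at least a fixed positive amount to $H$ with a probability bounded below. A Borel–Cantelli or strong Markov argument over successive excursions then gives $H_\infty=\infty$ a.s.
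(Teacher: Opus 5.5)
Your proposal is correct and follows the same route as the paper: the ergodic theorem applied to $f(x)=x$ gives $H_t/t\to m>0$ a.s., so $H_\infty=\infty$, and the explicit solution then forces $S_t\to 0$. You also transfer the almost sure convergence from the stationary start $P_0\sim\pi$ to the fixed start $P_0=p_0$, using shift-invariance of the event and the fact that bounded harmonic functions are constant under Harris recurrence; the paper's proof passes over this point silently, so it is a welcome addition.
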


\begin{proof}
The result follows from the Ergodic Theorem for Markov processes. Given that $\varphi \equiv 1$, the integrated intensity $H_t$ is simply $\int_0^t P_s \, ds$. The time average satisfies:
\begin{equation*}
    \lim_{t \to \infty} \frac{H_t}{t} = \lim_{t \to \infty} \frac{1}{t} \int_0^t P_s \, ds = \int_0^\infty x \pi(x) \, dx > 0 \quad \text{a.s.}
\end{equation*}
This linear growth implies that $H_t \approx t \cdot K$ for large $t$ and some constant $K>0$. Consequently, the integrated intensity diverges ($H_\infty = \infty$) almost surely, resulting in the extinction of the susceptible population according to the explicit solution \eqref{eq:si-solution}.
\end{proof}

\subsection{Model I: A Bounded Rate via the Jacobi Process}
\label{sec:jacobi}

We first consider a scenario where the transmission intensity $(P_t^J)_{t\geq 0}$ is governed by a Jacobi process. This formulation is biologically sound, satisfying the physical requirement that the transmission rate remains positive and bounded by the constraints of human interaction. The coupled dynamics are described by the following system of SDEs:
\begin{equation}\label{JacobiModel}
    \begin{cases}
        dS_t = -P_t^J \varphi(t) S_t(1 - S_t) \, dt, \\
        dP_t^J = \theta(\mu - P_t^J) \, dt + \sigma \sqrt{P_t^J(a - P_t^J)} \, dW_t,
    \end{cases}
\end{equation}
with initial conditions $S_0 = s_0 \in (0,1)$ and $P_0^J = p_0 \in (0,a)$. Here, $\theta > 0$ represents the mean-reversion speed, $\mu \in (0,a)$ is the long-term mean, and $\sigma > 0$ denotes the volatility coefficient (see \cite{delbaen2002interest}). 

The parameter $a > 0$ defines the upper bound of the transmission rate, representing the maximum possible intensity of infection under the specific social and environmental conditions of the population. While the CIR process (discussed below) allows for arbitrarily large spikes in transmission, the Jacobi process restricts $P_t^J$ to the compact interval $[0, a]$. This boundedness is consistent with the reality that even in the absence of interventions, the number of effective contacts per unit of time is limited by physical and temporal constraints. For the process to remain strictly within the boundaries $(0, a)$ almost surely, we assume that the parameters satisfy the condition $\theta \mu > 0$ and $\theta(a - \mu) > 0$, ensuring that the drift is strong enough to push the process away from the absorbing states at $0$ and $a$.

Next, we state the properties of the Jacobi process required to ensure the model is well-defined. Provided the parameters are positive, the process is confined to the interval $(0,a)$ for all $t \ge 0$ almost surely. Additionally, if the parameters satisfy the Feller-like conditions $2\theta\mu \ge \sigma^2 a$ and $2\theta(a-\mu) \ge \sigma^2 a$, the process admits a unique stationary Beta distribution with support on $(0,a)$. In this case, the first moment of the process is given by $\mathbb{E}[P_t^J] = \mu + (p_0 - \mu)e^{-\theta t}$ for $t \ge 0$. 

Following the framework established in Section \ref{sec:general_model}, the asymptotic behavior of the epidemic depends on the convergence of the integrated intensity process $H_t^J$. The following lemma provides an explicit expression for the expected cumulative pressure, which is used to verify the stability conditions of Theorem \ref{intervention}.

\begin{lemma}\label{lem:mean-jacobi}
Under the Jacobi framework, the expected Integrated Transmission Intensity at time $t \geq 0$ is given by
\begin{equation*}
    \mathbb{E}[H_t^J] = \int_0^t \left[ \mu + (p_0 - \mu)e^{-\theta s} \right] \varphi(s) \, ds.
\end{equation*}
\end{lemma}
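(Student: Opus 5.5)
The plan is to write $H_t^J=\int_0^t \varphi(s)P_s^J\,ds$ and move the expectation inside the integral by Tonelli's theorem, exactly as in the proof of Theorem \ref{intervention}. The integrand $(s,\omega)\mapsto \varphi(s)P_s^J(\omega)$ is nonnegative, since $\varphi>0$ and $P^J$ stays in $(0,a)$ almost surely. It is also jointly measurable, because $P^J$ has continuous paths and is adapted, hence progressively measurable. Tonelli therefore applies with no integrability hypothesis and gives
\[
\mathbb{E}[H_t^J]=\int_0^t \varphi(s)\,\mathbb{E}[P_s^J]\,ds .
\]

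It remains to justify the first-moment formula $\mathbb{E}[P_s^J]=\mu+(p_0-\mu)e^{-\theta s}$, which the paper states and which I would briefly re-derive. I would write the Jacobi SDE in integral form:
\[
P_s^J=p_0+\int_0^s\theta(\mu-P_r^J)\,dr+\int_0^s\sigma\sqrt{P_r^J(a-P_r^J)}\,dW_r .
\]
Because $0<P^J<a$, the diffusion coefficient is bounded by $\sigma a/2$. The stochastic integral is therefore a true square-integrable martingale with zero mean, not just a local martingale. Taking expectations and using Fubini (everything is bounded by $a$), the function $m(s)=\mathbb{E}[P_s^J]$ satisfies $m(s)=p_0+\int_0^s\theta(\mu-m(r))\,dr$. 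This is the linear ODE $m'=\theta(\mu-m)$ with $m(0)=p_0$, whose unique solution is the stated exponential formula. Substituting it into the Tonelli identity yields the lemma.

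The only point needing care is the martingale property of the stochastic integral, so that its expectation vanishes. The boundedness of the Jacobi process handles it immediately. I would also note that the formula holds in $[0,\infty]$ even if $\varphi$ is not integrable on $[0,t]$, since Tonelli requires only nonnegativity.
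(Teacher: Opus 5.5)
Your proposal is correct and follows the paper's proof. Both interchange expectation and time integral (Tonelli/Fubini) in $H_t^J=\int_0^t P_s^J\varphi(s)\,ds$ and substitute $\mathbb{E}[P_s^J]=\mu+(p_0-\mu)e^{-\theta s}$; you additionally re-derive that first moment from the SDE (bounded diffusion coefficient, true martingale, linear ODE), whereas the paper simply quotes it, and your derivation needs only $P^J\in[0,a]$ rather than the Feller-type conditions the paper attaches to it.
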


\begin{proof}
The result follows by applying Fubini's Theorem to $H_t^J = \int_0^t P_s^J \varphi(s) \, ds$ and substituting the explicit expression for the first moment of $P_s^J$.
\end{proof}

\subsection{Model II: A Non-Negative Rate via the CIR Process}
\label{sec:cir}

As an alternative to the previous framework, we consider modeling $(P_t^C : t \ge 0)$ as a Cox--Ingersoll--Ross (CIR) process. While this process has non-negative sample paths, it lacks an upper bound, which constitutes a significant difference from the Jacobi process. The resulting joint dynamics are governed by the following system:
\begin{equation}\label{CIRModel}
    \begin{cases} 
        dS_t = -P^C_t \varphi(t) S_t(1 - S_t) \, dt, \\ 
        dP^C_t = \kappa(\eta - P^C_t) \, dt + \sigma \sqrt{P^C_t} \, dW_t, 
    \end{cases}
\end{equation}
where $\kappa, \eta$, and $\sigma$ are positive constants. Although the process maintains non-negative paths for any positive parameters, the Feller condition $2\kappa \eta > \sigma^2$ ensures that the paths remain strictly positive almost surely \cite{cox1985theory}. 

Under this condition, the process admits a unique stationary Gamma distribution with shape parameter $2\kappa\eta/\sigma^2$ and scale parameter $\sigma^2/2\kappa$. The expected value of the process, starting from an initial intensity $P_0^C = p_0 > 0$, is given by $\mathbb{E}[P^C_t] = \eta + (p_0 - \eta)e^{-\kappa t}$, which converges to the long-term mean $\eta$ as $t \to \infty$.

The following lemma provides the analytical form of the expected integrated intensity under the CIR model. This result is used to verify the integrability condition $\mathbb{E}[H_\infty] < \infty$ required for the long-term survival of the susceptible population.

\begin{lemma}
Under the CIR framework, the expected Integrated Transmission Intensity at time $t \geq 0$ is given by
\begin{equation*}
    \mathbb{E}[H_t^{CIR}] = \int_0^t \left[ \eta + (p_0 - \eta)e^{-\kappa s} \right] \varphi(s) \, ds.
\end{equation*}
\end{lemma}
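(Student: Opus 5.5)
The plan mirrors the proof of Lemma~\ref{lem:mean-jacobi}. We write $H_t^{CIR} = \int_0^t P_s^C \varphi(s)\,ds$, exchange expectation and time integral by Fubini--Tonelli, and substitute the first moment of the CIR process.

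\textbf{Justifying the exchange.} The integrand $(s,\omega)\mapsto P_s^C(\omega)\varphi(s)$ is jointly measurable on $[0,t]\times\Omega$: the CIR process has continuous sample paths and is adapted, hence progressively measurable, and $\varphi$ is a deterministic measurable function. The integrand is also non-negative, because $P^C$ has non-negative paths for positive parameters and $\varphi>0$. Tonelli's theorem therefore applies with no integrability hypothesis:
\[
\mathbb{E}[H_t^{CIR}] = \int_0^t \mathbb{E}[P_s^C]\,\varphi(s)\,ds .
\]
This identity holds in $[0,\infty]$.

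\textbf{The first moment.} Next I substitute $\mathbb{E}[P_s^C] = \eta + (p_0-\eta)e^{-\kappa s}$, which the paper has already stated. For completeness, this formula can be checked by writing the SDE in integral form,
\[
P_t^C = p_0 + \int_0^t \kappa(\eta - P_s^C)\,ds + \sigma\int_0^t \sqrt{P_s^C}\,dW_s .
\]
The stochastic integral is a true martingale, since $\mathbb{E}\int_0^t P_s^C\,ds<\infty$. One sees this with a localization argument, stopping at $\tau_n=\inf\{s: P_s^C\ge n\}$ and then applying Fatou's lemma, or by quoting the known square-integrability of CIR. Taking expectations gives the linear ODE $m'(t)=\kappa(\eta-m(t))$ with $m(0)=p_0$, whose solution is the stated formula.

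\textbf{Main obstacle.} The only delicate point is this martingale property, that is, the finiteness of $\mathbb{E}\int_0^t P_s^C\,ds$. If one takes the paper's stated moment formula as given, nothing else is needed.

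The resulting expression is finite whenever $\varphi$ is locally integrable. It also yields the intervention bound $\mathbb{E}[H_\infty^{CIR}] \le \max(p_0,\eta)\int_0^\infty\varphi$, which is what is needed to invoke Theorem~\ref{intervention}.
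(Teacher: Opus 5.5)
Your proposal is correct and takes the same route the paper uses for the analogous Lemma~\ref{lem:mean-jacobi}: interchange expectation and time integral in $H_t^{CIR}=\int_0^t P_s^C\varphi(s)\,ds$ via Fubini, then substitute $\mathbb{E}[P_s^C]=\eta+(p_0-\eta)e^{-\kappa s}$. Your use of Tonelli through non-negativity and your localization argument for the first moment add rigor that the paper leaves implicit.
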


A key advantage of the CIR process lies in its affine diffusion structure. By the Feynman--Kac Theorem, this property yields a closed-form expression for the Laplace transform of $H_t^{CIR}$ under the non-intervention regime ($\varphi \equiv 1$).
\begin{proposition}\label{prop:laplace}
For any $\lambda \ge 0$, the Laplace transform of $H_T^{CIR}$ is given by
\begin{equation}
\mathbb{E}[e^{-\lambda H_T^{CIR}} \mid P_0 = p_0] = \exp\{-A(T,\lambda) - B(T,\lambda)p_0\},
\end{equation}
where $A(T,\lambda)$ and $B(T,\lambda)$ are determined by a system of ordinary Riccati differential equations.
\end{proposition}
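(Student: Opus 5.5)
We follow the standard affine-process route. Define
\[
u(t,p) = \mathbb{E}\left[\exp\left(-\lambda\int_t^T P_s\,ds\right)\,\Big|\, P_t = p\right],
\]
for $t\in[0,T]$ and $p\ge 0$, where $P$ is the CIR process of \eqref{CIRModel} (with $\varphi\equiv1$). By the Feynman--Kac theorem, $u$ should solve the backward Kolmogorov equation with killing rate $\lambda p$:
\[
\partial_t u + \kappa(\eta-p)\,\partial_p u + \tfrac12\sigma^2 p\,\partial_{pp} u - \lambda p\,u = 0, \qquad u(T,p)=1.
\]
Since drift, squared diffusion and killing rate are all affine in $p$, we try the exponential-affine ansatz $u(t,p)=\exp\{-A(T-t,\lambda)-B(T-t,\lambda)p\}$. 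Substituting with $\tau=T-t$ and separating the terms constant in $p$ from those linear in $p$ gives
\[
\partial_\tau B = \lambda - \kappa B - \tfrac12\sigma^2 B^2,\quad B(0)=0, \qquad \partial_\tau A = \kappa\eta B,\quad A(0)=0.
\]
This is the claimed Riccati system. The first equation is a scalar Riccati ODE with constant coefficients. With $\gamma=\sqrt{\kappa^2+2\sigma^2\lambda}$ it integrates explicitly to
\[
B(\tau)=\frac{2\lambda(e^{\gamma\tau}-1)}{(\gamma+\kappa)(e^{\gamma\tau}-1)+2\gamma},
\]
and $A(\tau)=\kappa\eta\int_0^\tau B$ gives the usual logarithmic formula. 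Setting $t=0$ yields the statement.

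The delicate step is making Feynman--Kac rigorous, because the diffusion coefficient $\sigma\sqrt p$ is not Lipschitz at $0$ and the killing rate is unbounded. We avoid quoting a general theorem and verify directly. Define $v(t,p)=\exp\{-A(T-t)-B(T-t)p\}$ using the explicit solution. Then:
\begin{itemize}
\item $B\ge 0$ on $[0,T]$ for $\lambda\ge0$, since $B$ starts at $0$ with $B'(0)=\lambda\ge0$, and the right-hand side is positive whenever $B=0$ and $\lambda>0$.
\item $B$ stays bounded and does not blow up, since it increases to the positive root of $\lambda-\kappa B-\tfrac12\sigma^2B^2$.
\end{itemize}
Hence $0<v\le 1$. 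Apply Itô's formula to $M_t = v(t,P_t)\exp(-\lambda\int_0^t P_s\,ds)$. The ODEs cancel the $dt$ terms, so $M$ is a local martingale with $dM_t = -M_t B(T-t)\sigma\sqrt{P_t}\,dW_t$. Since $0\le M_t\le 1$, $M$ is a bounded local martingale and therefore a true martingale. Consequently $\mathbb{E}[M_T]=M_0$, i.e.
\[
\mathbb{E}[e^{-\lambda H_T}] = v(0,p_0) = e^{-A(T)-B(T)p_0}.
\]
This verification argument needs no regularity beyond Itô's formula for the smooth function $v$. The main point to check is that $B$ exists globally and is nonnegative, which the explicit formula settles because its denominator is positive for $\tau\ge0$.
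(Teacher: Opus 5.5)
Your proof is correct. The computational core is the same as the paper's: the same exponential-affine ansatz and the same Riccati system $B'=\lambda-\kappa B-\tfrac12\sigma^2B^2$, $A'=\kappa\eta B$, $A(0)=B(0)=0$. Your backward problem with $u(T,p)=1$ becomes the paper's forward problem with $u(0,x)=1$ under $\tau=T-t$, by time-homogeneity.

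Where you differ is the direction of the justification. The paper argues from the expectation to the PDE. It asserts, via Feynman--Kac, that $u(t,x)=\mathbb{E}[e^{-\lambda\int_0^t P_s\,ds}\mid P_0=x]$ is a classical solution of $\partial_t u=\mathcal{L}u-\lambda xu$. It then tacitly relies on uniqueness for that Cauchy problem to identify $u$ with the affine candidate. Neither the smoothness of $u$ nor that uniqueness is addressed, and smoothness is not automatic here because the generator degenerates at $x=0$ and $\sigma\sqrt{x}$ is not Lipschitz.

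You instead run the representation (verification) direction by hand. The candidate $v$ is explicit and smooth, and the Riccati equations kill the drift of $M_t=v(t,P_t)e^{-\lambda\int_0^t P_s\,ds}$. The a priori bounds $A,B\ge0$ and $P\ge0$ then make $M$ a bounded local martingale, hence a true martingale. So your argument is a complete proof where the paper's is a sketch. It also shows exactly where $\lambda\ge0$ is used, namely the boundedness of $M$. That is the step that would need a genuine integrability argument for the MGF in Proposition~\ref{prop:mgf_cir}, where the same $M$ is only a positive local martingale. The paper's PDE-first presentation mainly buys motivation for the ansatz, which you also extract.

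One caution if you write out ``the usual logarithmic formula''. Your $A=\kappa\eta\int_0^\tau B$ is nonnegative, so under the convention $u=e^{-A-Bp}$ it reads $A(\tau)=-\frac{2\kappa\eta}{\sigma^2}\ln\bigl(2\gamma e^{(\gamma+\kappa)\tau/2}/[(\gamma+\kappa)(e^{\gamma\tau}-1)+2\gamma]\bigr)$. The closed form displayed in the paper omits this minus sign. Since the argument of that logarithm is at most $1$, the formula as printed would give $A\le0$.
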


\begin{proof}[Proof Sketch]
The derivation relies on the connection between SDEs and partial differential equations. By the Feynman--Kac formula, the function $u(t, x) = \mathbb{E}[e^{-\lambda \int_0^t P_s \, ds} \mid P_0 = x]$ satisfies the following PDE involving the infinitesimal generator $\mathcal{L}$ of the process:
\begin{equation}\label{eq:PDE_FeynmanKac}
\frac{\partial u}{\partial t} = \mathcal{L}u - \lambda x u, \quad u(0, x) = 1.
\end{equation}
For the CIR process, the operator $\mathcal{L}$ is defined as 
\begin{equation*}
\mathcal{L}u = \kappa(\eta - x)\frac{\partial u}{\partial x} + \frac{1}{2}\sigma^2 x \frac{\partial^2 u}{\partial x^2}.
\end{equation*} 
By assuming an exponential-affine structure $u(t, x) = \exp(-A(t) - B(t)x)$, the PDE \eqref{eq:PDE_FeynmanKac} reduces to a system of Riccati-type ODEs:
\begin{align*}
    B'(t) &= \lambda - \kappa B(t) - \frac{1}{2}\sigma^2 B^2(t), \quad B(0)=0, \\
    A'(t) &= \kappa \eta B(t), \quad A(0)=0.
\end{align*}
The resolution of such systems is well-established in the literature \cite{Lamberton2007, Glasserman2003}, yielding the explicit solutions:
\begin{align*}
    B(t, \lambda) &= \frac{2\lambda(e^{\gamma t} - 1)}{(\gamma + \kappa)(e^{\gamma t} - 1) + 2\gamma}, \\
    A(t, \lambda) &= \frac{2\kappa\eta}{\sigma^2} \ln \left( \frac{2\gamma e^{(\gamma+\kappa)t/2}}{(\gamma + \kappa)(e^{\gamma t} - 1) + 2\gamma} \right),
\end{align*}
where $\gamma = \sqrt{\kappa^2 + 2\sigma^2 \lambda}$. These expressions provide the closed-form Laplace transform.
\end{proof}
\subsection{{Random Time to Reach Infection Thresholds: The Non-Intervention Case}}
\label{sec:random_time}

In \cite{Kegan2005}, the authors investigate the time required for the proportion of susceptible individuals to reach a specific threshold from a random initial state. Motivated by this, our primary interest lies in characterizing the distribution of this random hitting time. While a closed-form distribution remains elusive for the unmitigated CIR model, we establish a rigorous bound for its cumulative distribution function. Let $S_0 = s_0 \in (0,1)$ be the initial susceptible fraction, and let $x \in (0, s_0)$ be a target threshold. Let $\tau \equiv \tau(s_0, x)$ denote the hitting time, defined as the random time at which the $S_\tau = x$.

From \eqref{eq:si-solution}, the event $\{\tau \leq t\}$ is equivalent to the integrated intensity $H_t^{CIR}$ exceeding a critical level:
$$\mathbb{P}(\tau \leq t) = \mathbb{P}(H_t^{CIR} \ge M),$$
where $M(s_0, x) = \ln\left[\frac{s_0(1-x)}{x(1-s_0)}\right]$ represents the cumulative infection pressure required to reach the target prevalence. We can bound this probability using Chernoff's inequality:

\begin{equation}
\mathbb{P}(\tau \leq t) \leq \inf_{\lambda > 0} e^{-\lambda M} \mathbb{E}[e^{\lambda H_t^{CIR}}].\label{Chernoff}    
\end{equation}
In the baseline scenario ($\varphi \equiv 1$), the moment-generating function (MGF) of $H_t^{CIR}$ is analytically tractable due to the affine structure of the process, allowing for a precise characterization of this hitting time distribution.

A similar argument allows us to characterize the Moment Generating Function (MGF) of the integrated intensity. Unlike the Laplace transform, the MGF is only well-defined for values of the parameter where the expectation does not diverge, leading to a restricted domain.

\begin{proposition}\label{prop:mgf_cir}
Let $H_t^{CIR}$ be the integrated intensity of a CIR process. For any $\lambda \in (0, \lambda_c)$, where $\lambda_c = \frac{\kappa^2}{2\sigma^2}$, the MGF is given by:
\begin{equation*}
\mathbb{E}\left[e^{\lambda H^{CIR}_t} \mid P_0 = p_0\right] = \exp\bigl\{A(t,\lambda) + B(t,\lambda)p_0\bigr\},
\end{equation*}
where the functions $A$ and $B$ are as defined in Proposition \ref{prop:laplace}, substituting $\lambda$ with $-\lambda$ and setting $\gamma = \sqrt{\kappa^2 - 2\sigma^2\lambda}$.
\end{proposition}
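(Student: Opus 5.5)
The plan is to mirror the Feynman--Kac argument of Proposition~\ref{prop:laplace} with $\lambda$ replaced by $-\lambda$, and then justify that the resulting candidate really equals the expectation on $(0,\lambda_c)$. Fix $\lambda\in(0,\lambda_c)$. Then $\kappa^2-2\sigma^2\lambda>0$, so $\gamma=\sqrt{\kappa^2-2\sigma^2\lambda}\in(0,\kappa)$ is real and positive. Seek $u(t,x)=\exp\{A(t)+B(t)x\}$ solving
\[
\partial_t u=\mathcal{L}u+\lambda x u,\qquad u(0,x)=1 .
\]
Substituting and matching the coefficients of $x^0$ and $x^1$ gives
\[
B'=\lambda-\kappa B+\tfrac12\sigma^2B^2,\quad B(0)=0,\qquad A'=\kappa\eta B,\quad A(0)=0.
\]
These are exactly the Riccati system of Proposition~\ref{prop:laplace} with $\lambda\mapsto-\lambda$ and signs flipped in $u$. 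Solving (or verifying by differentiation) gives
\[
B(t,\lambda)=\frac{2\lambda(e^{\gamma t}-1)}{(\gamma+\kappa)(e^{\gamma t}-1)+2\gamma},
\]
with $A$ obtained by integration, producing the logarithmic formula of Proposition~\ref{prop:laplace}. Since $\gamma>0$ and $\kappa+\gamma>0$, the denominator is strictly positive for all $t\ge0$. Hence $B$ and $A$ are finite and smooth on $[0,\infty)$, and $B$ increases to the smaller root $(\kappa-\gamma)/\sigma^2$ of the quadratic $\lambda-\kappa b+\tfrac12\sigma^2b^2$.

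The verification step is as follows. Fix $T$ and define
\[
M_s=\exp\Bigl\{\lambda\int_0^sP_r\,dr+A(T-s)+B(T-s)P_s\Bigr\},\qquad 0\le s\le T.
\]
By It\^o's formula and the Riccati equations, the $ds$-terms cancel, so
\[
dM_s=M_s\,B(T-s)\,\sigma\sqrt{P_s}\,dW_s ,
\]
and $M$ is a positive local martingale, hence a supermartingale. This already yields the inequality
\[
\mathbb{E}[e^{\lambda H_T}]=\mathbb{E}[M_T]\le M_0=e^{A(T)+B(T)p_0},
\]
which in particular shows that the MGF is finite.

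The main obstacle is upgrading this supermartingale to a true martingale. I will localize with $\tau_n=\inf\{s:P_s\ge n\}$, so that $\mathbb{E}[M_{T\wedge\tau_n}]=M_0$, and then show that $\{M_{T\wedge\tau_n}\}_n$ is uniformly integrable. My first attempt is the following: pick $\lambda'\in(\lambda,\lambda_c)$ and $p>1$ with $p\lambda\le\lambda'$. Since $B(\cdot,\lambda)$ is bounded by $(\kappa-\gamma)/\sigma^2$, the bound $M^p_{T\wedge\tau_n}\le C\exp\{p\lambda H_T+pB^*P_{T\wedge\tau_n}\}$ has to be controlled. To do this, I will use the same supermartingale argument with a slightly larger parameter, exploiting the fact that the Riccati solution for $(p\lambda, pB)$-type terminal data stays finite whenever the combined exponent remains below the explosion threshold; this holds because $\lambda<\lambda_c$ leaves slack. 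If that route becomes messy, the fallback is the standard Novikov/Kazamaki-type criterion for affine processes, namely the Kallsen--Muhle-Karbe result that exponential affine local martingales are true martingales when the Riccati solutions do not explode on $[0,T]$. That criterion holds here precisely because $\gamma$ is real.

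Once $M$ is known to be a martingale, $\mathbb{E}[M_T]=M_0$, and since $A(0)=B(0)=0$ this gives $\mathbb{E}[e^{\lambda H_t}\mid P_0=p_0]=\exp\{A(t,\lambda)+B(t,\lambda)p_0\}$, as claimed.
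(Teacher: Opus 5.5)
Your Riccati system coincides with the paper's. Your $B$ and $A(t)=\kappa\eta\int_0^t B(r)\,dr\ge 0$ are the right functions; the $+2\lambda$ in the numerator of $B$ is the correct reading of the statement's ``$\lambda\mapsto-\lambda$'' sign convention. Your justification, however, is genuinely different. The paper just invokes Feynman--Kac and infers finiteness from the sign of $\Delta=\kappa^2-2\sigma^2\lambda$. You instead verify by It\^o's formula that $M_s$ is a positive local martingale, get $\mathbb{E}[e^{\lambda H_T}]\le e^{A(T)+B(T)p_0}$ for free from the supermartingale property, and then upgrade to equality. The paper's route is shorter but heuristic, for two reasons:
\begin{itemize}
\item Feynman--Kac with the unbounded potential $+\lambda x$ presupposes the finiteness and regularity of the very expectation being computed.
\item ``Real solution iff $\Delta>0$'' is not the right criterion: for fixed $t$ the Riccati solution stays real and finite on $[0,t]$ also for $\lambda$ somewhat above $\lambda_c$.
\end{itemize}
Your verification proves exactly the stated formula on $(0,\lambda_c)$ without relying on either claim. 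The one step you leave loose is uniform integrability, and it closes as follows. Bound $M^p_{T\wedge\tau_n}$ using $H_{T\wedge\tau_n}$ rather than $H_T$. Fix $p>1$ with $p\lambda<\lambda_c$, and let $\tilde B$ solve $\tilde B'=p\lambda-\kappa\tilde B+\frac12\sigma^2\tilde B^2$, $\tilde B(0)=0$; this solution is global since $p\lambda<\lambda_c$. Because $(pB)'=p\lambda-\kappa(pB)+\frac{1}{2p}\sigma^2(pB)^2\le p\lambda-\kappa(pB)+\frac12\sigma^2(pB)^2$, the comparison principle gives $pB\le\tilde B$ on $[0,T]$. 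Hence $M_s^p\le e^{pA(T)}\tilde M_s$, where $\tilde M_s=\exp\{p\lambda\int_0^sP_r\,dr+\tilde A(T-s)+\tilde B(T-s)P_s\}$ and $\tilde A(u)=\kappa\eta\int_0^u\tilde B(r)\,dr\ge 0$. Since $\tilde M$ is again a positive local martingale, optional stopping for supermartingales gives $\sup_n\mathbb{E}[M_{T\wedge\tau_n}^p]\le e^{pA(T)}\tilde M_0<\infty$. This makes the appeal to Kallsen--Muhle-Karbe unnecessary. Note also that such affine criteria need non-explosion of the Riccati solution on $[0,T]$, for which a real $\gamma$ is sufficient but not necessary.
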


\begin{proof}
The proof follows the same steps as in the Laplace transform case by applying the Feynman--Kac Theorem to the expectation $u(t, x) = \mathbb{E}[e^{\lambda \int_0^t P_s \, ds} \mid P_0 = x]$. The resulting system of Riccati equations:
\begin{align*}
    B'(t) &= \frac{1}{2}\sigma^2 B^2(t) - \kappa B(t) + \lambda, \quad B(0)=0, \\
    A'(t) &= \kappa\eta B(t), \quad A(0)=0,
\end{align*}
possesses a real solution if and only if the discriminant $\Delta = \kappa^2 - 2\sigma^2\lambda$ is positive. This condition is satisfied for $\lambda < \lambda_c$, ensuring that the integral intensity does not explode and the MGF remains finite.
\end{proof}
The following corollary is an immediate consequence of \eqref{Chernoff} and Proposition \ref{prop:mgf_cir}.

\begin{corollary}
The probability of reaching the infection threshold before time $t$ is bounded by:
\begin{equation}\label{eq:optimization_problem}
    \mathbb{P}(\tau \leq t) \leq \inf_{\lambda \in (0, \lambda_c)} \exp\bigl( -\lambda M + A(t,\lambda) + B(t,\lambda)p_0 \bigr).
\end{equation}
\end{corollary}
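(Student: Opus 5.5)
The bound follows by inserting the closed-form moment-generating function of Proposition \ref{prop:mgf_cir} into the Chernoff estimate \eqref{Chernoff}. The only things to check are that each step is legitimate for every admissible $\lambda$, and that the infimum may be restricted to $(0,\lambda_c)$.

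\emph{Reduction to a tail bound.} By Proposition \ref{prop:exact-solution}, $S_t$ is a continuous, strictly decreasing function of $H_t^{CIR}$ along each path. Strict monotonicity holds because $P^C_t>0$ a.s. under the Feller condition, so $H_t^{CIR}$ is strictly increasing in $t$. Hence $\{\tau\le t\}=\{S_t\le x\}=\{H_t^{CIR}\ge M\}$, with $M=M(s_0,x)>0$ because $x<s_0$. This is the identity already recorded before \eqref{Chernoff}.

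\emph{Chernoff step.} Fix $\lambda\in(0,\lambda_c)$. Since $y\mapsto e^{\lambda y}$ is nonnegative and increasing, Markov's inequality gives
\begin{equation*}
\mathbb{P}(H_t^{CIR}\ge M)\le e^{-\lambda M}\,\mathbb{E}\bigl[e^{\lambda H_t^{CIR}}\mid P_0=p_0\bigr].
\end{equation*}
This bound holds for every $\lambda>0$. For $\lambda\ge\lambda_c$ the right-hand side may be $+\infty$, so discarding those values can only enlarge the infimum and the inequality survives. For $\lambda\in(0,\lambda_c)$, Proposition \ref{prop:mgf_cir} replaces the expectation by $\exp\{A(t,\lambda)+B(t,\lambda)p_0\}$, so for each such $\lambda$
\begin{equation*}
\mathbb{P}(\tau\le t)\le \exp\bigl(-\lambda M+A(t,\lambda)+B(t,\lambda)p_0\bigr).
\end{equation*}
The left side does not depend on $\lambda$, so taking the infimum over $(0,\lambda_c)$ yields \eqref{eq:optimization_problem}.

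\emph{Main obstacle.} The only delicate point is the validity of Proposition \ref{prop:mgf_cir} on the whole interval $(0,\lambda_c)$. There are two issues.
\begin{itemize}
\item The Riccati solution $B(t,\lambda)$ must not blow up on $[0,t]$. For $\lambda<\lambda_c$ we have $\gamma>0$ real. The denominator $(\gamma+\kappa)(e^{\gamma t}-1)+2\gamma$ is then positive, so $A$ and $B$ are finite for all $t$.
\item The Feynman--Kac representation must be justified for a positive exponent $\lambda$, where the integrand is unbounded. Here I would argue through a localization or martingale verification: show that $\exp\{\lambda H_s+A(t-s,\lambda)+B(t-s,\lambda)P_s\}$ is a true martingale, or more simply only a supermartingale. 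A supermartingale suffices, since it gives the inequality $\mathbb{E}[e^{\lambda H_t}]\le e^{A+Bp_0}$, and the upper bound is all the Chernoff argument needs.
\end{itemize}
Taking Proposition \ref{prop:mgf_cir} as given, as the paper allows, the corollary is then immediate.
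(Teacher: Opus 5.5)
Your proposal is correct and follows the paper's route: the paper calls the corollary an immediate consequence of the Chernoff bound \eqref{Chernoff} combined with the closed-form MGF of Proposition \ref{prop:mgf_cir}, which is exactly your Markov-inequality-plus-substitution argument, with the infimum restricted to $(0,\lambda_c)$. Your remark that $\exp\{\lambda H_s+A(t-s,\lambda)+B(t-s,\lambda)P_s\}$ is a positive local martingale, hence a supermartingale, is a worthwhile refinement: it gives $\mathbb{E}_{p_0}[e^{\lambda H_t^{CIR}}]\le e^{A(t,\lambda)+B(t,\lambda)p_0}$ without the full Feynman--Kac equality, and this inequality is all the corollary needs.
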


Next, we aim to further explore the inequality obtained in the corollary above. Let $t > 0$ be fixed, and let $f:(0, \lambda_c) \to \mathbb{R}$ be defined by 
\begin{equation}
    f(\lambda) = \exp(-\lambda M + \Lambda_t(\lambda))
\end{equation}
where $\Lambda_t(\lambda) = A(t, \lambda) + B(t, \lambda)p_0 = \ln\left(\mathbb{E}_{p_0}\left[e^{\lambda H^{CIR}_t}\right]\right)$. Here, we define $\mathbb{E}_{p_0}[\cdot] = \mathbb{E}[\cdot \mid P_0 = p_0]$ as the expectation conditioned on the initial condition $p_0$. We then have the following proposition:

\begin{proposition}
\label{prop:asymptotic_risk}
Let $t>0$ be fixed. If $M > \mathbb{E}_{p_0}[H_t^{CIR}]$, then:

\begin{enumerate}
    \item[(i)] $f$ is strictly convex on $(0, \lambda_c)$ and reaches its minimum value in $(0, \lambda_c)$. 
    \item[(ii)] Let $\lambda^*(M)$ be the point that minimizes the function $f(\lambda)$ in $(0, \lambda_c)$. Then, $\lambda^*(M)$ is a strictly increasing function of $M$, and
    $$
    \lim_{M\uparrow\infty}\lambda^*(M) = \lambda_c = \frac{\kappa^2}{2\sigma^2}.
    $$
\end{enumerate}
\end{proposition}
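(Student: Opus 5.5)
The plan is to reduce everything to the log-MGF $g(\lambda) := -\lambda M + \Lambda_t(\lambda)$, since $f = e^{g}$ and $\exp$ is strictly increasing and strictly convex. First, $\Lambda_t(\lambda) = \ln \mathbb{E}_{p_0}[e^{\lambda H_t^{CIR}}]$ is a cumulant generating function. On the open set $(0,\lambda_c)$, where by Proposition \ref{prop:mgf_cir} the MGF is finite, it is $C^\infty$; I will differentiate under the expectation, which is justified by finiteness of the MGF slightly beyond any $\lambda<\lambda_c$. Its second derivative is the variance of $H_t^{CIR}$ under the exponentially tilted measure $d\mathbb{Q}_\lambda \propto e^{\lambda H_t^{CIR}}d\mathbb{P}$. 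Since $\sigma>0$, the CIR path is non-degenerate, so $H_t^{CIR}$ is not a.s. constant and $\Lambda_t''(\lambda)>0$. Hence $g$ is strictly convex. Then $f = \exp\circ g$ is strictly convex, because $f'' = (g'' + (g')^2)e^{g} > 0$. This gives the first half of (i).

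For the existence of an interior minimiser in (i), I will study $g'(\lambda) = \Lambda_t'(\lambda) - M$. At the left end, $\Lambda_t'(0^+) = \mathbb{E}_{p_0}[H_t^{CIR}]$, so the hypothesis $M>\mathbb{E}_{p_0}[H_t^{CIR}]$ gives $g'(0^+)<0$. It then suffices to show $g'(\lambda)>0$ for $\lambda$ close to $\lambda_c$. By strict monotonicity of $g'$, the zero $\lambda^*(M)$ is then unique, and it is the unique minimiser of $f$ since $f' = g' f$. To get the sign near $\lambda_c$ I will use the explicit expressions $A(t,\lambda) + B(t,\lambda)p_0$ from Proposition \ref{prop:mgf_cir}, with $\gamma = \sqrt{\kappa^2 - 2\sigma^2\lambda}$, and compute $\partial_\lambda(A + Bp_0)$ as $\gamma\downarrow 0$.

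\textbf{This is the step I expect to be the main obstacle.} For fixed finite $t$ the closed form appears to stay finite as $\gamma\to 0$, with the denominator of $B$ tending to $\kappa t + 2 - \ldots$. So I must check carefully whether $\Lambda_t'(\lambda)\to\infty$ as $\lambda\uparrow\lambda_c$. If it does not, the interior minimum holds only for $M < \Lambda_t'(\lambda_c^-)$, and the argument has to be stated under that restriction. The first thing I would try is to read $\Lambda_t'(\lambda_c^-) = \sup_{\lambda<\lambda_c}\Lambda_t'(\lambda)$ directly from the formula. I would then treat ``$M$ below this level'' as the working range for both (i) and (ii).

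For (ii), $\lambda^*(M)$ is defined implicitly by $\Lambda_t'(\lambda^*) = M$. Since $\Lambda_t''>0$, the implicit function theorem (equivalently, $\Lambda_t'$ being a strictly increasing smooth bijection onto its range) gives
\begin{equation*}
\frac{d\lambda^*}{dM} = \frac{1}{\Lambda_t''(\lambda^*(M))} > 0,
\end{equation*}
so $\lambda^*$ is strictly increasing. For the limit, $\lambda^*(M)$ is increasing and bounded by $\lambda_c$, so it has a limit $L\le\lambda_c$. If $L<\lambda_c$, continuity would give $\Lambda_t'(L) \ge \Lambda_t'(\lambda^*(M)) = M$ for every admissible $M$. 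This contradicts the finiteness of $\Lambda_t'(L)$ when $M$ ranges up to $\Lambda_t'(\lambda_c^-)$, which the blow-up analysis above shows to be the appropriate supremum. Hence $L=\lambda_c$.
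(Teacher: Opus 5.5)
Your argument follows the paper's proof step for step: tilted variance for $\Lambda_t''>0$, $f''=(g''+(g')^2)e^{g}$, the signs of $g'$ at both ends, and the implicit function theorem. The step you single out is exactly where that proof breaks. The paper simply asserts that $A(t,\lambda)$ and $B(t,\lambda)$ tend to $\infty$ as $\lambda\uparrow\lambda_c$, and for fixed $t$ this is false. You should make your ``appears to stay finite'' definitive, as follows. Take $B$ to be the solution of the Riccati system in the proof of Proposition~\ref{prop:mgf_cir}, and $A(t,\lambda)=\kappa\eta\int_0^t B(s,\lambda)\,ds$. At $\lambda=\lambda_c$ the equation reads $B'=\tfrac12\sigma^2(B-\kappa/\sigma^2)^2$. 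Its solution $B(s,\lambda_c)=\kappa^2 s/\bigl(\sigma^2(2+\kappa s)\bigr)$ stays below $\kappa/\sigma^2$ for every $s$. By smooth dependence on parameters, $B$ and $\partial_\lambda B$ are continuous in $\lambda$ near $\lambda_c$, uniformly on $[0,t]$, so
\begin{equation*}
M_c:=\lim_{\lambda\uparrow\lambda_c}\Lambda_t'(\lambda)=\kappa\eta\int_0^t\partial_\lambda B(s,\lambda_c)\,ds+p_0\,\partial_\lambda B(t,\lambda_c)<\infty .
\end{equation*}
In fact the MGF of $H_t^{CIR}$ only explodes at some $\lambda^\dagger(t)>\lambda_c$. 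The value $\lambda_c$ is the threshold as $t\to\infty$, since $\lambda^\dagger(t)\downarrow\lambda_c$.

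Consequently, for $M\ge M_c$ you have $g'<0$ on all of $(0,\lambda_c)$. There $f$ is strictly decreasing, its infimum is $f(\lambda_c^-)$, and it has no minimiser. So part (i) is false, $\lambda^*(M)$ is undefined for large $M$, and the limit $M\uparrow\infty$ in (ii) is meaningless. No proof of the statement as written is possible. The correct statement is your restricted one: for $\mathbb{E}_{p_0}[H_t^{CIR}]<M<M_c$, with $\lambda^*(M)\uparrow\lambda_c$ as $M\uparrow M_c$. Your argument proves it.

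This is not a marginal case. The derivative $\partial_\lambda B(s,\lambda_c)=\bigl((2+\kappa s)^3-8\bigr)/\bigl(3\kappa(2+\kappa s)^2\bigr)$ does not depend on $\sigma$. In the paper's example ($\kappa=2$, $\eta=0.4$, $p_0=0.5$, $t=1$) it gives $M_c=\tfrac{4}{15}+\tfrac{7}{24}\approx 0.558$ for every $\sigma$. Hence the thresholds $M=0.6,0.65,0.7$ in Figure~\ref{fig:chernoff_regimes} lie in the regime where the infimum over $(0,\lambda_c)$ is not attained.

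Two small corrections to your limit step. First, $\Lambda_t'(L)\ge\Lambda_t'(\lambda^*(M))$ follows from monotonicity, not continuity. Second, when $M_c<\infty$ the contradiction comes from strict monotonicity ($\Lambda_t'(L)<M_c$), not from finiteness of $\Lambda_t'(L)$.

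If you want a result valid for all $M>\mathbb{E}_{p_0}[H_t^{CIR}]$, you would have to optimise over the full finiteness interval $(0,\lambda^\dagger(t))$, after extending Proposition~\ref{prop:mgf_cir} to it. There $\Lambda_t'\to\infty$ at the right end, and your proof goes through verbatim. The limit is then $\lambda^*(M)\to\lambda^\dagger(t)$, not $\lambda_c$.
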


\begin{proof}
(i) To show the strict convexity of $f(\lambda)$, it is sufficient to analyze the function $G(\lambda) = -\lambda M + \Lambda_t(\lambda)$ for $\lambda \in (0, \lambda_c)$. First, we analyze the differentiability of $\Lambda_t(\lambda)$. Let $\lambda \in (0, \lambda_c)$ be fixed, and let $\epsilon > 0$ be such that $(\lambda - \epsilon, \lambda + \epsilon) \subset (0, \lambda_c)$. For any $h \in (-\epsilon, \epsilon)$ and $k \in \mathbb{N}$, note that $x^k e^{(\lambda+h)x} \leq C e^{(\lambda+\epsilon)x}$ for all $x > 0$, where $C$ is a constant depending on $k$. This allows the application of the Dominated Convergence Theorem to obtain:

$$
\Lambda_t^{\prime}(\lambda) = \frac{\mathbb{E}_{p_0}[H_t^{CIR}e^{\lambda H_t^{CIR}}]}{\mathbb{E}_{p_0}[e^{\lambda H_t^{CIR}}]},
$$
and
$$
\Lambda_t^{\prime\prime}(\lambda) = \frac{\mathbb{E}_{p_0}[e^{\lambda H_t^{CIR}}]\mathbb{E}_{p_0}[(H_t^{CIR})^2 e^{\lambda H_t^{CIR}}] - (\mathbb{E}_{p_0}[H_t^{CIR}e^{\lambda H_t^{CIR}}])^2}{(\mathbb{E}_{p_0}[e^{\lambda H_t^{CIR}}])^2}.
$$

By applying the Esscher transform, defined as:
$$
\mathbb{P}_\lambda(d\omega) = \frac{e^{\lambda H_t^{CIR}(\omega)}}{\mathbb{E}_{p_0}[e^{\lambda H_t^{CIR}}]} \mathbb{P}(d\omega), 
$$
we observe that $\Lambda_t^{\prime}(\lambda) = \mathbb{E}_{\mathbb{P}_\lambda}[H_t^{CIR}]$ and $\Lambda_t^{\prime\prime}(\lambda) = \text{Var}_{\mathbb{P}_\lambda}(H_t^{CIR}) > 0$, since $\sigma > 0$ implies that $H_t^{CIR}$ is a non-degenerate random variable.

Given that $G^{\prime\prime}(\lambda) = \Lambda_t^{\prime\prime}(\lambda) > 0$, it follows that:
$$
f^{\prime\prime}(\lambda) = e^{G(\lambda)} [G^{\prime\prime}(\lambda) + (G^\prime(\lambda))^2] > 0. 
$$
Thus, $f$ is strictly convex. To prove the existence of a unique minimum in the interior of the interval, we examine the boundary behavior of $G^{\prime}(\lambda) = \Lambda_t^\prime(\lambda) - M$:

\begin{itemize}
    \item Using the Dominated Convergence Theorem again, $\lim_{\lambda \downarrow 0} \Lambda_t^\prime(\lambda) = \mathbb{E}_{p_0}[H_t^{CIR}]$. Since $M > \mathbb{E}_{p_0}[H_t^{CIR}]$, it follows that $\lim_{\lambda \downarrow 0} G^{\prime}(\lambda) < 0$.
    \item Note that $A(t, \lambda)$ and $B(t, \lambda)$ converge to $\infty$ as $\lambda \uparrow \lambda_c$; therefore, $\lim_{\lambda \uparrow \lambda_c} \Lambda_t^\prime(\lambda) = \infty$, which implies $\lim_{\lambda \uparrow \lambda_c} G^\prime(\lambda) > 0$. 
\end{itemize}
By the Intermediate Value Theorem, there exists a unique $\lambda^*(M) \in (0, \lambda_c)$ such that $G^\prime(\lambda^*) = 0$, confirming that $f$ reaches a global minimum.

(ii) The optimal point $\lambda^*(M)$ is implicitly defined by the condition $\Lambda_t'(\lambda^*) = M$. Applying the Implicit Function Theorem, we obtain:
\begin{equation*}
    \frac{d\lambda^*}{dM} = \frac{1}{\Lambda_t''(\lambda^*)} = \frac{1}{\text{Var}_{\mathbb{P}_{\lambda^*}}(H_t^{CIR})} > 0.
\end{equation*}
This shows that $\lambda^*(M)$ is strictly increasing in $M$. Furthermore, since $\Lambda_t'(\lambda)$ is a continuous, strictly increasing mapping from $(0, \lambda_c)$ to $(\mathbb{E}_{p_0}[H_t^{CIR}], \infty)$, the condition $M \to \infty$ forces $\lambda^*(M)$ to approach $\lambda_c$ to satisfy the equality $\Lambda_t'(\lambda^*) = M$.
\end{proof}

\section{Numerical studies}

\subsection{The Jacobi Case: Simulations and Discussion}

To assess the practical dynamics of the model, we conduct numerical simulations using the Euler--Maruyama discretization scheme. We generate an ensemble of $N_{\text{sim}} = 50$ trajectories over a time horizon $T=15.0$ with a time step of $\Delta t = 5 \times 10^{-3}$. For these simulations, the parameter set is fixed at $(\theta, \mu, \sigma) = (2.0, 0.4, 0.3)$ and the upper bound is set to $a = 1.0$, with initial conditions $p_0 = 0.5$ and $S_0 = 0.99$.

\paragraph{Baseline Scenario: Non-intervention ($\varphi \equiv 1$).}
In this case, we examine the system's natural evolution without external control. As illustrated in Figure \ref{fig:jacobi_baseline}, the transmission intensity $P_t^J$ exhibits stochastic fluctuations strictly confined within the interval $(0, a)$. Consistent with Theorem \ref{no-intervention}, the integrated intensity $H_t$ grows linearly on average, driving the epidemic to eventual saturation. The bottom panel of Figure \ref{fig:jacobi_baseline} confirms that the infected fraction $1-S_t$ converges to unity for all sample paths, indicating that the entire population becomes infected in the absence of mitigation measures.

\begin{figure}[htbp]
    \centering
    \includegraphics[width=0.6\textwidth]{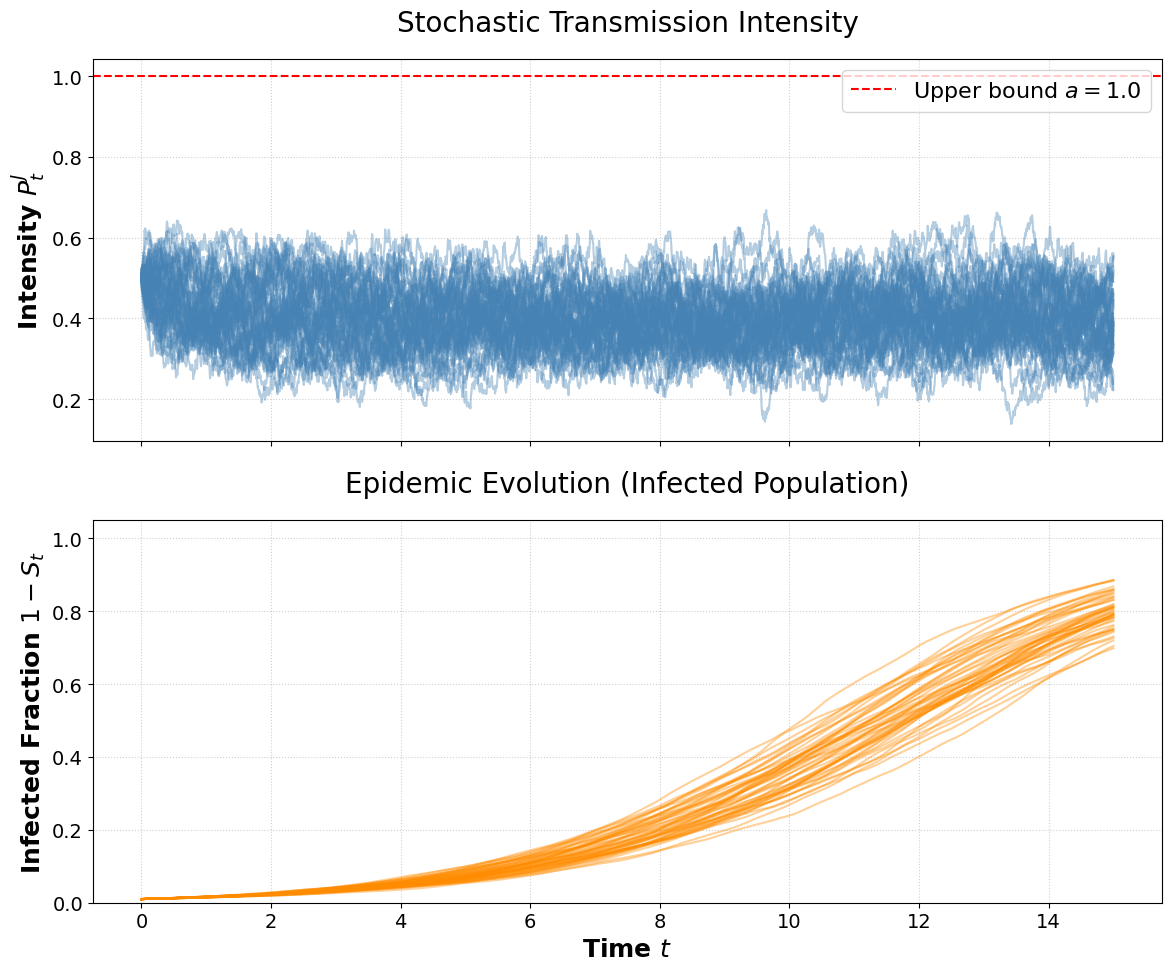} 
    \caption{Stochastic trajectories of the Jacobi transmission intensity $P^J_t$ (top) and the corresponding infected fraction $1-S_t$ (bottom) for the baseline case $\varphi \equiv 1$. The trajectories illustrate the inevitable saturation of the epidemic ($1-S_t \to 1$) as the cumulative intensity $H_t$ diverges.}
    \label{fig:jacobi_baseline}
\end{figure}
\paragraph{Scenario with Exponential Intervention.}
We now analyze the impact of a public health intervention modeled by the function $\varphi(t) = e^{-0.2t}$. In this scenario, the effective transmission rate $\beta_t = \varphi(t) P_t^J$ is gradually suppressed, representing a continuous strengthening of control measures. Figure \ref{fig:jacobi_exp} provides numerical evidence supporting Theorem \ref{intervention}. Although the underlying stochastic process $P_t^J$ remains active, the multiplicative effect of the decay function $\varphi(t)$ ensures that the integrated intensity $H_t$ converges to a finite random variable $H_\infty$ almost surely. Consequently, the epidemic growth is halted, and a significant fraction of the population remains susceptible in the long term ($S_\infty > 0$), as shown in the bottom panel.

\begin{figure}[H]
    \centering
    \includegraphics[width=0.6\textwidth]{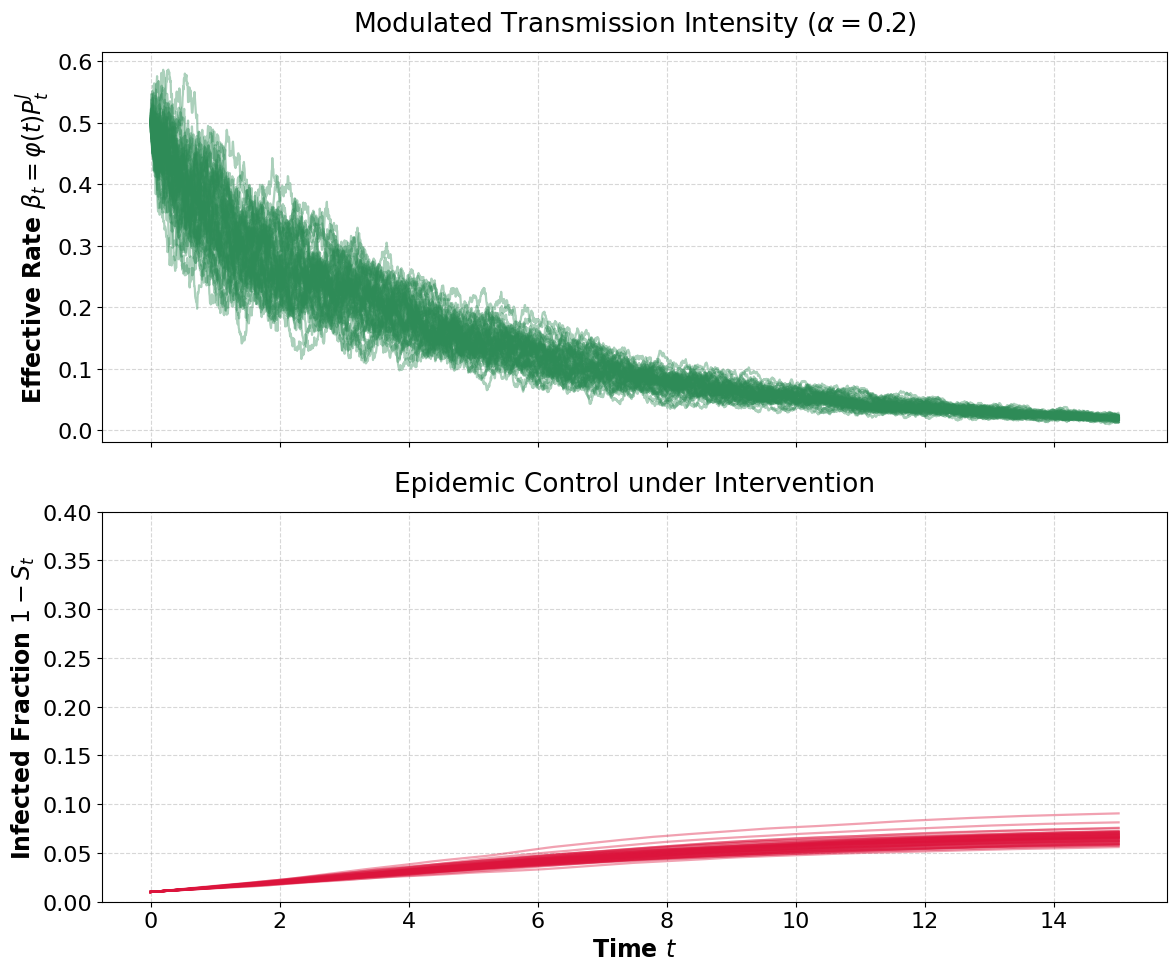}
    \caption{System dynamics under exponential intervention $\varphi(t) = e^{-0.2t}$. The top panel shows the effective transmission rate $\beta_t = \varphi(t)P_t^J$ decaying towards zero, while the bottom panel illustrates how the infected fraction $1-S_t$ stabilizes at a value strictly less than one, preserving a susceptible fraction $S_\infty > 0$.}
    \label{fig:jacobi_exp}
\end{figure}

\subsection{The CIR Case: Simulations and Discussion}

Numerical simulations for the CIR framework were performed using the same discretization settings ($T=15$, $\Delta t=5\times 10^{-3}$) and initial conditions ($p_0=0.5$, $S_0=0.99$) as in the Jacobi case. We generated $N_{\text{sim}} = 50$ independent trajectories. The structural parameters for the CIR process were set to $(\kappa, \eta, \sigma) = (2.0, 0.4, 0.3)$, satisfying the Feller condition ($2\kappa\eta > \sigma^2$) to ensure the transmission rate remains strictly positive.

\paragraph{Baseline Scenario: Non-intervention ($\varphi \equiv 1$).}
In the absence of control measures, the transmission rate is driven solely by the CIR process ($\beta_t = P_t^C$). Under this regime, the integrated intensity $H_t$ diverges to infinity almost surely as $t \to \infty$. Consequently, the entire population eventually becomes infected, as illustrated in Figure \ref{fig:cir_baseline}, where the infected fraction $1-S_t$ asymptotically approaches unity for all simulated paths.

\begin{figure}
    \centering
    \includegraphics[width=0.6\textwidth]{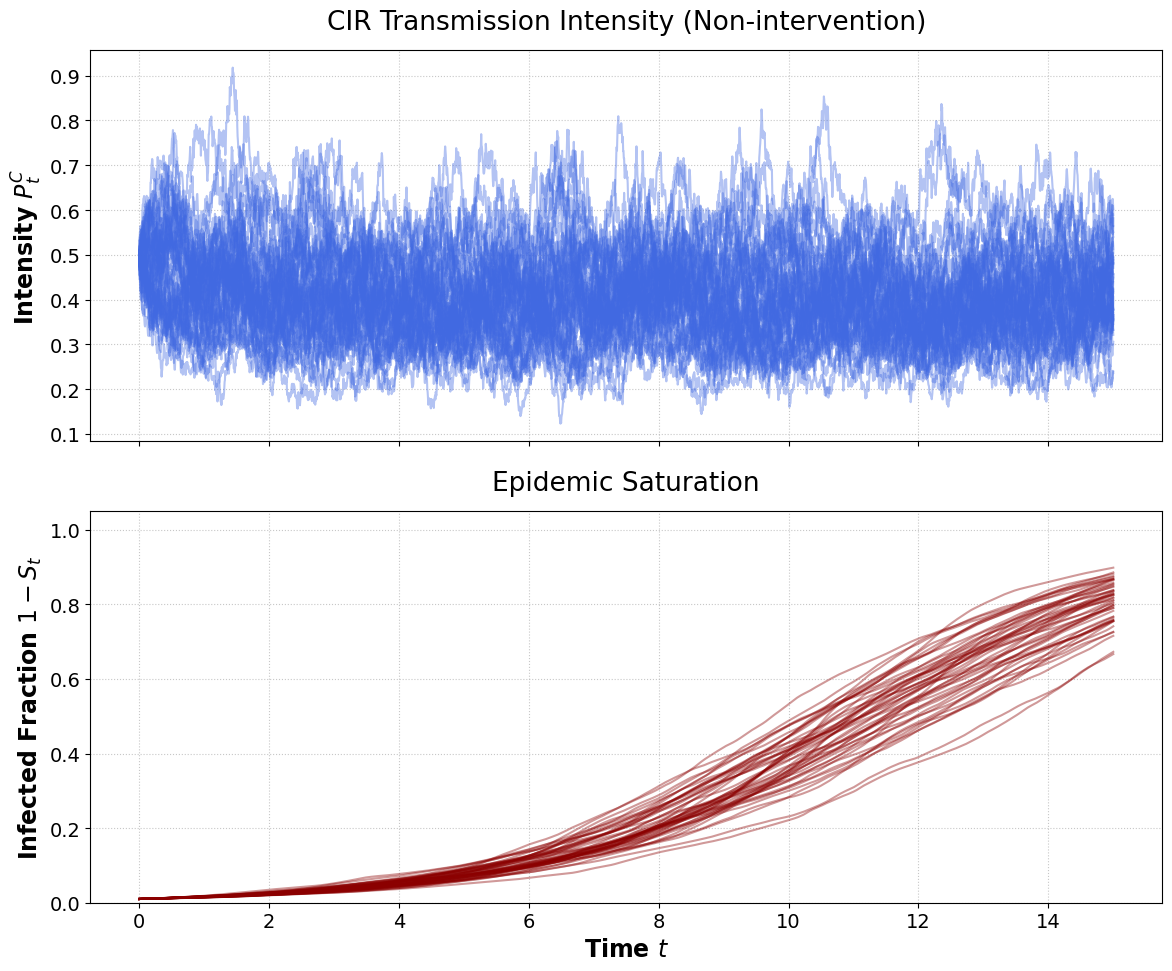} 
    \caption{Temporal evolution of the CIR transmission rate $P_t^{C}$ (top) and the resulting infected fraction $1-S_t$ (bottom) for $\varphi \equiv 1$. As $t$ increases, the cumulative pressure $H_t$ leads the system to a state where $1-S_t \approx 1$ almost surely.}
    \label{fig:cir_baseline}
\end{figure}

Numerical simulations were performed using the discretization parameters ($T=15, \Delta t=5\times 10^{-3}$) and initial conditions ($p_0=0.5, S_0=0.99$). The parameters for the CIR process were set to $(\kappa, \eta, \sigma) = (2, 0.4, 0.3)$ and we used $N_{\text{sim}} = 50$ trajectories.

\paragraph{Scenario with exponential Intervention.}
We now consider simulations with the exponential intervention $\varphi(t)=e^{-0.2t}$. As Figure \ref{fig:cir_exp} illustrates, the transmission rate $\beta_t = \varphi(t)P_t^C$ decays rapidly enough to ensure the integrated intensity process $H_t$ remains a.s. finite. This creates a ``continuous curbing effect'', preventing the eventual infection of the entire population.

\begin{figure}
    \centering
    \includegraphics[width=0.6\textwidth]{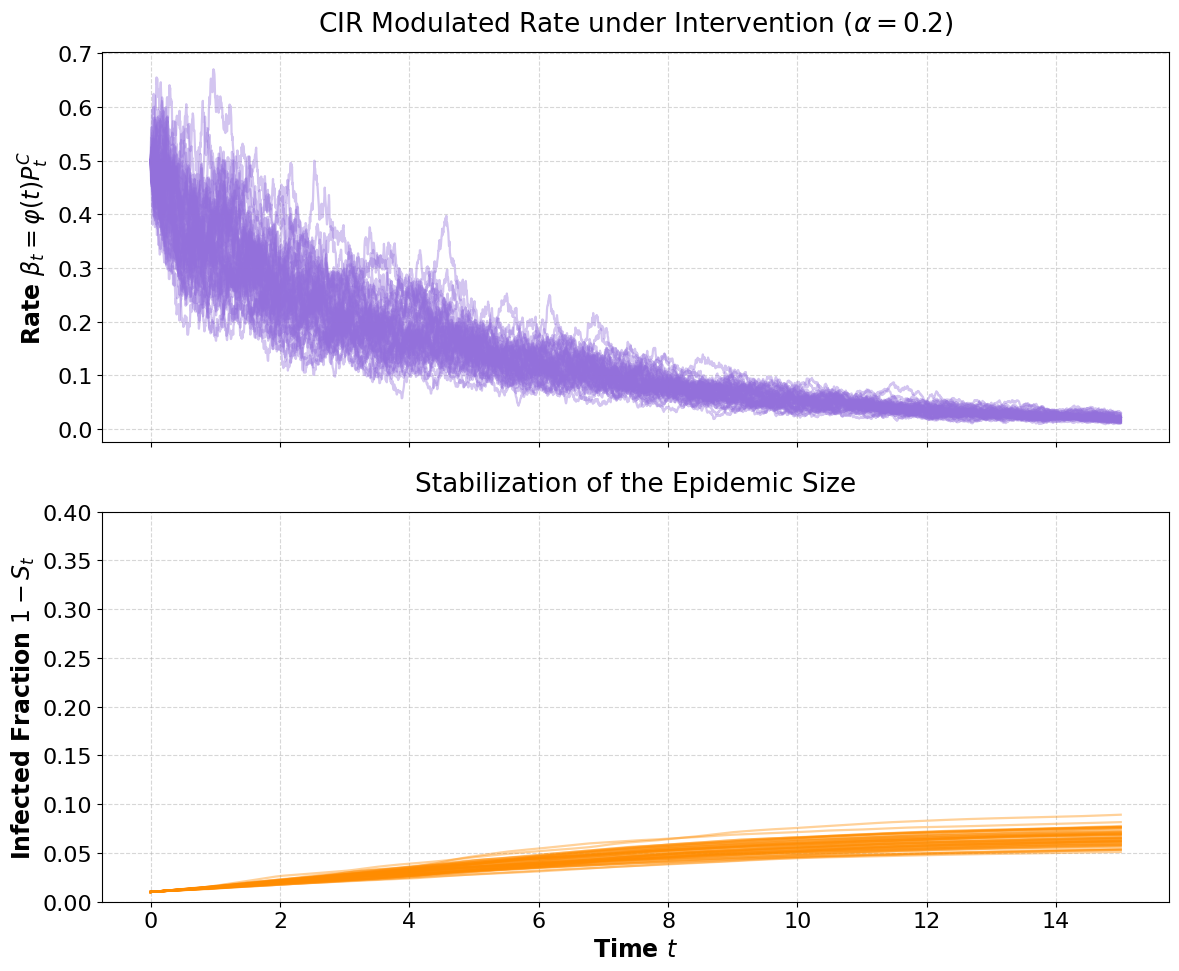}
    \caption{The epidemic dynamics under the intervention $\varphi(t) = e^{-0.2t}$. The top panel displays the effect of the intervention on the transmission rate, while the bottom panel shows the resulting impact on the fraction of infected individuals.}
    \label{fig:cir_exp}
\end{figure}

\textit{Remark.} The analytical tractability of the integrated intensity $(H_t^{CIR}; t \ge 0)$ under the CIR process in the non-intervention scenario provides significant advantages from a risk analysis perspective. As previously shown, the affine structure of the process yields a closed-form expression for the Laplace transform of $H_t^{CIR}$, which facilitates the calculation of its moments.

\subsection{Simulation of Threshold Times Without Intervention Under the CIR Process}

To numerically illustrate the random time to reach infection thresholds in the non-intervention CIR scenario, we set the model parameters to $\kappa=2$, $\eta=0.4$, $p_0=0.5$, and $t=1$. Under these settings, the expected integrated intensity is $\mathbb{E}_{p_0}[H_1^{CIR}] \approx 0.443$. To better explore the sensitivity of the optimal parameter and the corresponding risk bounds, we consider thresholds $M \in \{0.3, 0.5, 0.55, 0.6, 0.65, 0.7\}$. As illustrated in Figure~\ref{fig:chernoff_regimes}, when $M=0.3$, the bound is trivial (it equals 1). However, for values of $M$ above the mean, $\lambda^*(M)$ rapidly shifts towards the singularity $\lambda_c$. Crucially, the minimum value of $f(\lambda)$, which represents the upper bound for the probability $\mathbb{P}(\tau \le 1)$, decreases significantly as $M$ increases. For $M=0.5$, the probability bound is approximately $0.88$, while for a more severe threshold like $M=0.7$, it drops to $0.45$. This demonstrates that while the optimal parameter is constrained by the singularity, the risk of reaching high infection levels before time $t=1$ decreases as expected.

\begin{figure}[H]
    \centering
    \includegraphics[width=0.85\textwidth]{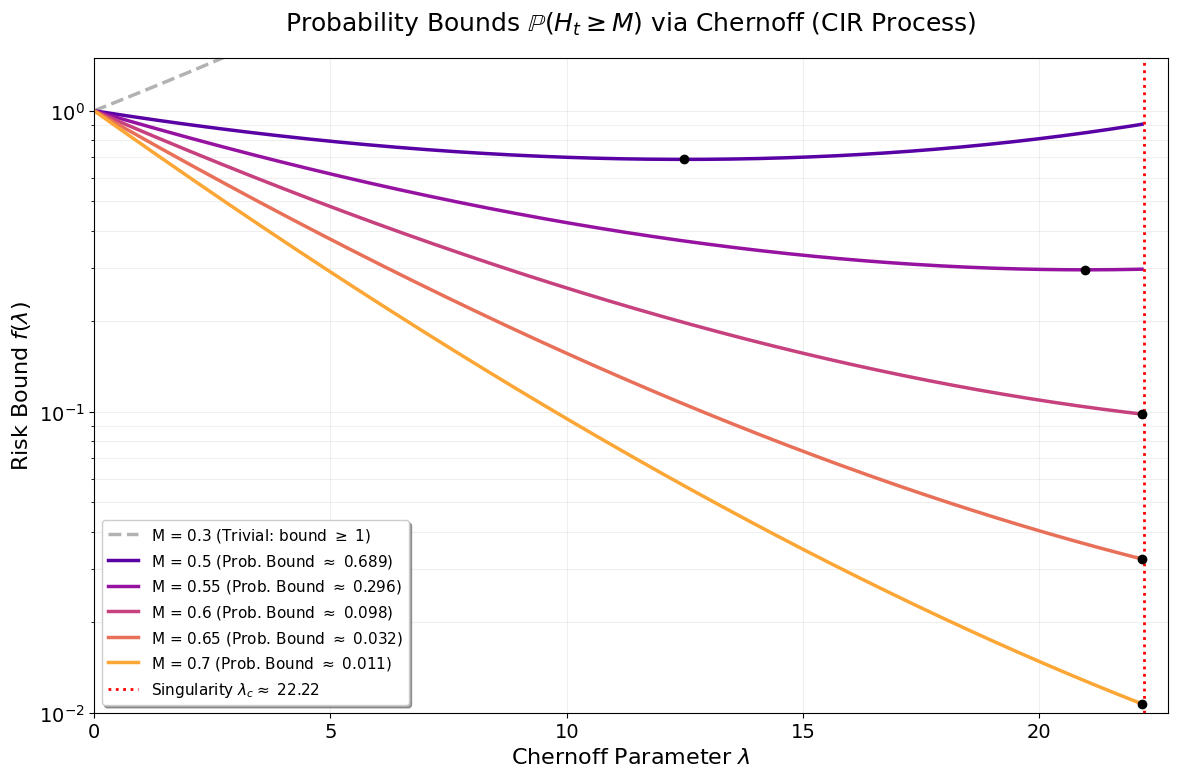}
    \caption{Sensitivity of the risk bound $f(\lambda)$ and corresponding probability bounds across different regimes of $M$. The legend displays the approximate value of the tightest Chernoff bound $f(\lambda^*)$, which acts as an upper bound for the probability of reaching the infection threshold before time $t=1$.}
    \label{fig:chernoff_regimes}
\end{figure}
\section{Numerical Comparison of the Jacobi and CIR Processes for $P_t$}
\label{sec:numerico}

\subsection{Experimental Setup and Calibration}

The core of our methodology relies on isolating the effect of the process support. Specifically, we aim to contrast the dynamics of the bounded Jacobi process, $P_t^J$, with those of the unbounded CIR process, $P_t^C$, ensuring that no other statistical factors bias the comparison. To achieve this, we calibrate both models to share the same first two stationary moments, allowing us to observe how the difference in their supports affects the epidemic's progression. 

We first align the drift components by setting an identical mean-reversion speed ($\theta_J = \kappa = 0.5$) and a shared long-term average ($\mu = \eta = 0.3$) for both models. For the Jacobi process, we fixed an upper bound of $a = 0.5$ to represent a physical or biological limit on transmission intensity.

The main challenge in this setup lies in the calibration of the diffusion coefficients. Given that the Jacobi process is physically constrained by its boundaries, its stationary variance is defined by the expression:
\begin{equation}\label{eq:var_jacobi}
\text{Var}[P_\infty^J] = \frac{\sigma_J^2 \mu (a - \mu)}{2\theta_J + a\sigma_J^2}.
\end{equation}
Our procedure consists of selecting a value for $\sigma_J$, calculating the resulting variance, and then solving for the specific $\sigma_{CIR}$ that yields the exact same level of dispersion in the CIR process (where $\text{Var}[P_\infty^C] = \sigma_{CIR}^2 \eta / (2\kappa)$). This ensures that we are strictly controlling for the amount of noise in the system. Consequently, any discrepancy observed in the epidemic dynamics can be attributed solely to the boundary effects rather than differences in the first two moments. To guarantee the robustness of these findings, particularly for rare tail events, we initialize the simulation at $s_0 = 0.99$ and execute $N=100,000$ Monte Carlo paths.
\subsection{Scenario 1: Dynamics under Unchecked Transmission ($\varphi \equiv 1$)}

In the absence of intervention, both processes eventually lead to the infection of the entire population ($S_t \to 0$ as $t \to \infty$). The objective is to identify how the stochastic drivers differ before this terminal state is reached.

\paragraph{Low Volatility Regime.}
By setting $\sigma_J = 0.2$, we obtain a corresponding $\sigma_{CIR} \approx 0.089$ via the moment-matching calibration. We observe the system at a time horizon of $T=15$. As displayed in Figure \ref{fig:scenario1_1}, the resulting probability density functions for both models are virtually identical. In this low-volatility regime, the process rarely reaches the upper bound $a=0.5$; consequently, the Jacobi process behaves like an unbounded process near its mean, making the structural difference between the two models negligible.

\begin{figure}
    \centering
    \includegraphics[width=0.85\textwidth]{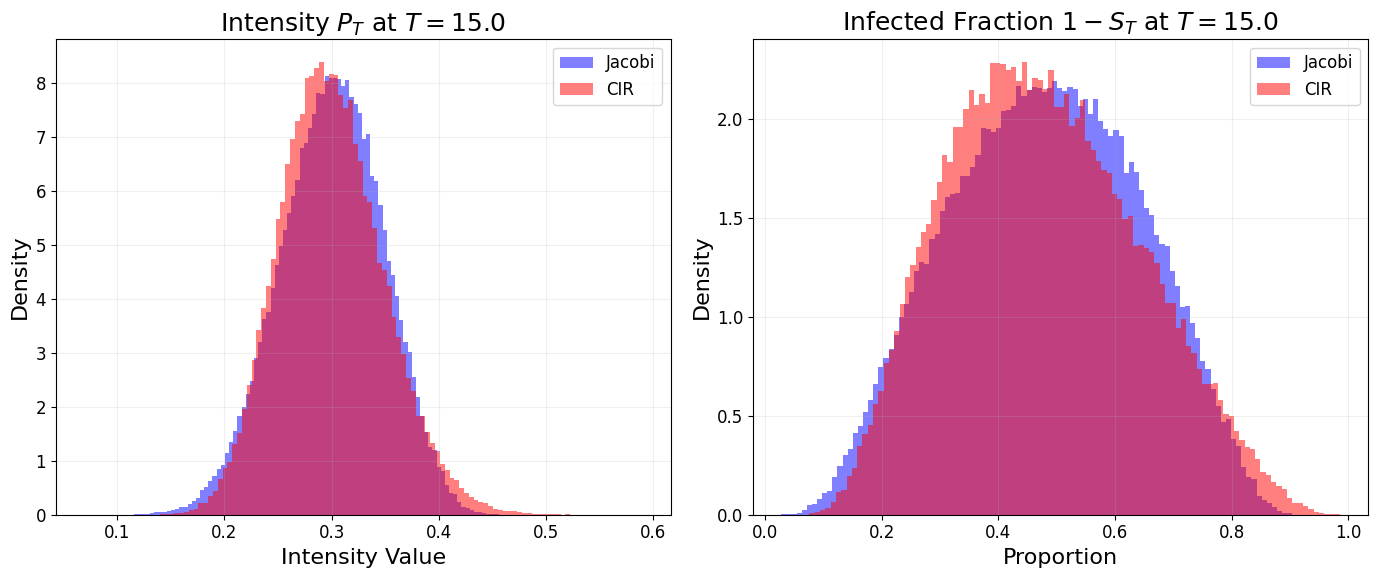}
    \caption{Comparison under Low Volatility ($\sigma_J = 0.2, \sigma_{CIR} \approx 0.089$). The left panel shows the infection intensity $P_T$, and the right panel shows the epidemic size $1-S_T$. The distributions overlap almost perfectly, demonstrating model equivalence when fluctuations remain far from the boundaries.}
    \label{fig:scenario1_1}
\end{figure}

\paragraph{High Volatility and Cumulative Pressure ($T=100$).} 
By setting the volatility to $\sigma_J = 1.5$ (with a corresponding $\sigma_{CIR} \approx 0.387$) over an extended horizon of $T=100$, we observe that the final epidemic size, $1-S_T$, saturates at $1.0$ for both models. However, the cumulative transmission intensity, $H_T = \int_0^T \beta_s \, ds$, highlights the fundamental differences between the two frameworks, as illustrated in Figure \ref{fig:scenario2}. The CIR process exhibits a heavy right tail because it admits extreme transmission intensities that are structurally non-realizable within the bounded Jacobi framework. This suggests that this tail behavior, rather than the mean dynamics, governs the elevated risk bounds in unbounded models.

\begin{figure}
    \centering
    \includegraphics[width=0.6\textwidth]{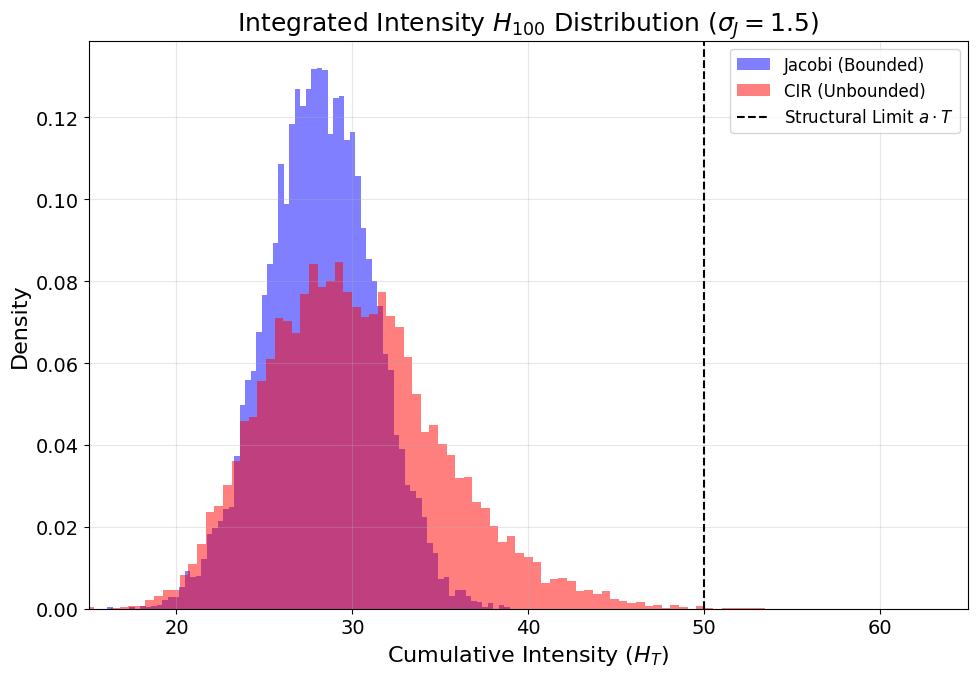}
    \caption{Comparison of the cumulative transmission intensity $H_T$ at $T=100$. While the epidemic size has saturated, the CIR model (red) assigns significant weight to extreme cumulative intensities that lie far beyond the structural limits of the Jacobi model (blue).}
    \label{fig:scenario2}
\end{figure}
\paragraph{Early Stage Divergence ($T=10$).}
When observing the process up to $t=10$, the disparity in the behavior of $H_t$ becomes apparent in the fraction of infected individuals. Given that the CIR process is unbounded, it admits significantly larger excursions than the Jacobi process, thereby accelerating the infection dynamics in extreme cases. As displayed in Figure~\ref{fig:scenario3}, the CIR process generates a heavier right tail in the distribution of the fraction of the population infected ($1-S_T$) during the early stages of the epidemic. This confirms that even if the stationary means are identical, the lack of a structural ceiling in the CIR framework allows for more rapid outbreaks.

\begin{figure}[H]
    \centering
    \includegraphics[width=0.6\textwidth]{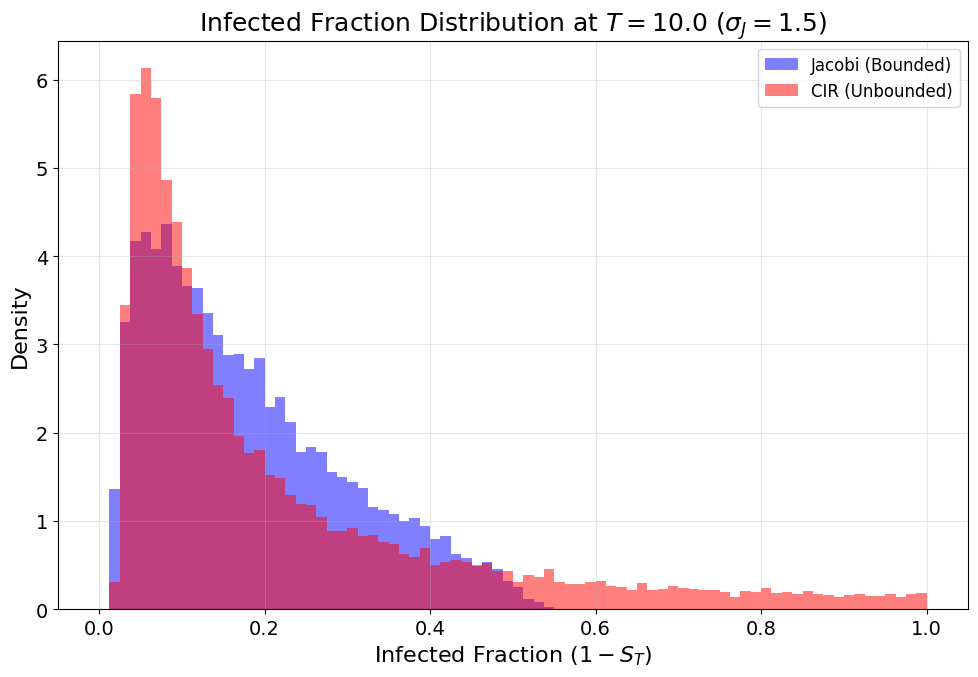} 
    \caption{Distribution of the infected fraction $1-S_T$ at the early stage $T=10$ under high volatility ($\sigma_J = 1.5, \sigma_{CIR} \approx 0.387$). The comparison shows that the CIR model (red) produces a heavier right tail, indicating a higher probability of rapid infection spread compared to the bounded Jacobi model (blue).}
    \label{fig:scenario3}
\end{figure}

\subsection{Scenario 2: Impact of Mitigation Measures ($\varphi(s)=e^{-\alpha s}$)}

Next, we evaluate the models under a stringent intervention regime, $\varphi(s)=e^{-0.2s}$. 

\paragraph{Tail Risk and Model Comparison.}
In the presence of high volatility ($\sigma_J=1.5$), the discrepancies between the two models become increasingly pronounced. Figure~\ref{fig:scenario_intervention} illustrates the final epidemic size, $1-S_\infty$, under this exponential intervention. This divergence is primarily due to the heavier right tail of the epidemic distribution under the CIR model. As summarized in Table~\ref{tab:intervention_metrics}, the extreme outcomes differ significantly: the 95th percentile (Value at Risk) for the Jacobi model is approximately $0.320$, whereas for the CIR model, it reaches $0.530$. These results show that, despite identical calibration of the mean and variance, the adoption of an unbounded support yields considerably more conservative estimates of outbreak severity under high volatility.

\begin{table}[htbp]
\centering
\caption{Comparison of Risk Metrics for the Final Epidemic Size ($1-S_\infty$) under intervention $\varphi(s) = e^{-0.2s}$.}
\label{tab:intervention_metrics}
\begin{tabular}{lcc}
\toprule
Metric & Jacobi (Bounded) & CIR (Unbounded) \\
\midrule
Mean Epidemic Size & 0.185 & 0.192 \\
95th Percentile (VaR) & \textbf{0.320} & \textbf{0.530} \\
Maximum Observed Size & 0.460 & 0.985 \\
\bottomrule
\end{tabular}
\end{table}

\begin{figure}[htbp]
    \centering
    \includegraphics[width=0.6\textwidth]{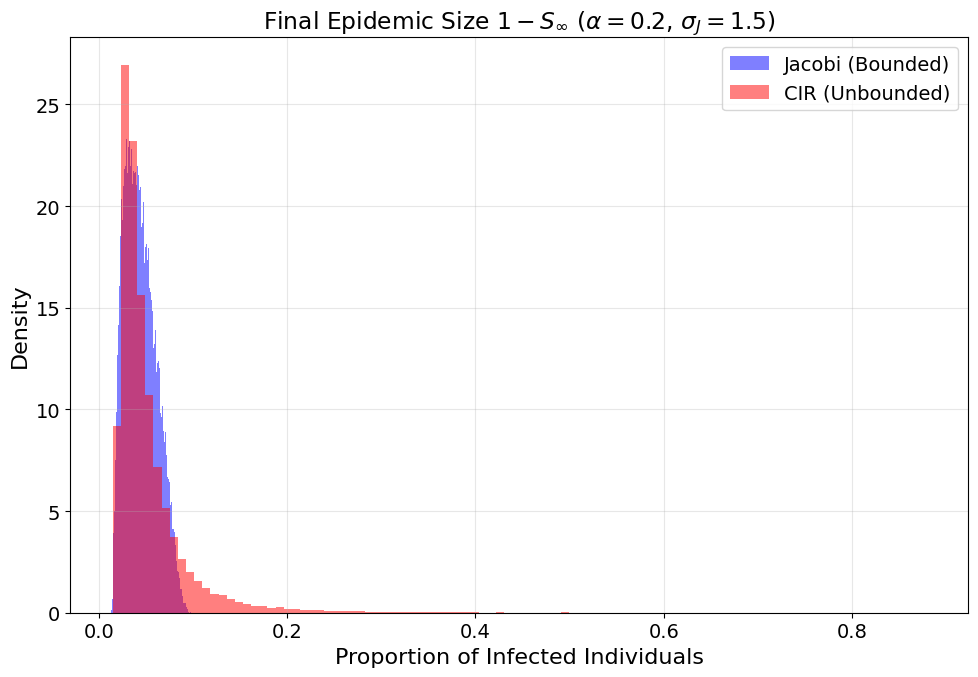}
    \caption{Final Epidemic Size under exponential intervention $\varphi(s) = e^{-0.2s}$. The bounded nature of the Jacobi process (blue) prevents the epidemic from reaching extreme sizes, while the CIR model (red) assigns significant probability to severe outbreaks despite the intervention.}
    \label{fig:scenario_intervention}
\end{figure}
\paragraph{Interpretation of Tail Divergence.}
These results are consistent with the theoretical properties of the chosen models. As expected, the unbounded nature of the CIR process allows for higher excursions in the transmission rates compared to the Jacobi model, which is strictly constrained by its upper bound~$a$. Even when both processes are calibrated to the same stationary moments, the lack of a ceiling in the CIR framework naturally leads to a heavier right tail in the distribution of the final epidemic size. 
\section{Discussion and Conclusions}
\label{sec:conclusion}

In this work, we compared the Jacobi and CIR processes as stochastic drivers for epidemic dynamics under intervention. Our analysis primarily centered on the CIR framework due to its mathematical tractability and its capacity to capture extreme fluctuations through heavier tails. We demonstrated that even when both models are statistically calibrated to share the same stationary mean and variance, the choice of support (bounded vs. unbounded) leads to significantly different risk assessments, particularly in high-volatility regimes.

Despite these insights, several statistical open problems remain to be addressed in future work. For instance, given an empirical epidemic dataset, can we develop robust estimation techniques to recover both the underlying process parameters and the deterministic intervention function? Furthermore, characterizing the specific inferential challenges posed by each model and deriving closed-form expressions for particular classes of intervention functions remain compelling avenues for further research.

\bibliographystyle{plainnat}
\bibliography{referencias2}

\end{document}